\newtheorem{theorem}{Theorem}
\newtheorem{lemma}[theorem]{Lemma}
\newtheorem{proposition}[theorem]{Proposition}
\newcommand{\tto}{\twoheadrightarrow}
\begin{document}

\title[Bimodules over Nakayama algebra]{Cell structure of bimodules over\\ radical square zero Nakayama algebras}

\author{Helena Jonsson}

\begin{abstract}
In this paper we describe the combinatorics of the cell structure of the
tensor category of bimodules over a radical square zero Nakayama algebra.
This accounts to an explicit description of left, right and two-sided cells.
\end{abstract}

\maketitle

\noindent
{\bf 2010 Mathematics Subject Classification:} 18D05; 16D20; 16G10

\noindent
{\bf Keywords:} bimodule; quiver; left cell; fiat 2-category; simple transitive
2-rep\-re\-sen\-ta\-tion; cell 2-representation

\section{Introduction and description of the results}\label{s1}
Let $\Bbbk$ be an algebraically closed field of characteristic zero. For a positive
integer $n>1$, let $Q_n$ denote the quotient of the path algebra of the quiver
\begin{displaymath}
\xymatrix{
&&&1\ar[dlll]_{\alpha _{1}}&&&\\
2\ar[r]_{\alpha _2}&3\ar[r]_{\alpha _3}&4\ar[r]_{\alpha _4}&
\dots\ar[r]_{\alpha _{n-2}}&n-1\ar[r]_{\alpha _{n-1}}&n\ar[ull]_{\alpha _{n}} 
} 
\end{displaymath}
of affine Coxeter type $\tilde{A}_{n-1}$ by the relations that all paths of length two in this
quiver are equal to zero. We note that we compose paths from right to left.

We also denote by $Q_1$ the algebra $\Bbbk[x]/(x^2)$ of dual numbers
over $\Bbbk$. This algebra is isomorphic to the quotient of the path algebra of the quiver
\begin{displaymath}
\xymatrix{
0\ar@(ur,dr)[]^{\alpha_0}
} 
\end{displaymath}
by the relation that the path of length two in this quiver is equal to zero.
For each positive integer $n$, the algebra  $Q_n$ is a Nakayama algebra, see \cite{Na}.
In what follows we fix $n$ and set $A=Q_n$.

Consider the tensor category $A$-mod-$A$ of all finite dimensional $A$-$A$-bimodules
and let $\mathcal{S}$ denote the set of isomorphism classes of indecomposable objects 
in $A$-mod-$A$. Note that $\mathcal{S}$ is an infinite set.
For an $A$-$A$-bimodule $X$, we will denote by $[X]$ the class of $X$ in $\mathcal{S}$.
By \cite[Section~3]{MM2}, the set $\mathcal{S}$ has the natural structure
of a multisemigroup, cf. \cite{KuM}, defined as follows: 
for two indecomposable $A$-$A$-bimodules $X$ and $Y$, we have
\begin{displaymath}
[X]\star[Y]:=\{[Z]\in \mathcal{S}\,:\, Z\text{ is isomorphic to a direct summand of }X\otimes_A Y\}.
\end{displaymath}
The basic combinatorial structure of a multisemigroup (or, as a special case, of a semigroup)
is encoded into the so-called  Green's relations, see \cite{Gr,GaMa,KuM}. For $\mathcal{S}$,
these Green's relations are defined as follows.

For two indecomposable $A$-$A$-bimodules $X$ and $Y$, we write $[X]\geq_L [Y]$ provided that there is
an indecomposable $A$-$A$-bimodules $Z$ such that $[X]\in [Z]\star [Y]$. The relation $\geq_L$ 
is a partial pre-order on $\mathcal{S}$, called the {\em left preorder} and equivalence classes for $\geq_L$
are called {\em left cells}. Similarly one defines the {\em right preorder} $\geq_R$ and 
{\em right cells} using $[Y]\star [Z]$, and also the {\em two-sided preorder} $\geq_J$ and 
{\em two-sided cells} using $[Z]\star[Y]\star [Z']$. We will abuse the language and often speak about
cells of bimodules (and not of isomorphism classes of bimodules).

The main aim of the present paper is an explicit description of left, right and two-sided cells in 
$\mathcal{S}$. As the algebra $A\otimes_{\Bbbk}A^{\mathrm{op}}$ is special biserial, cf. \cite{BR,WW},
all indecomposable $A$-$A$-bimodules split into three types, see Section~\ref{s2} for details:
\begin{itemize}
\item string bimodules;

\item band bimodules;

\item non-uniserial projective-injective bimodules. 
\end{itemize}

Following \cite{Jo,MZ}, for a string bimodule $X$ we consider a certain invariant $\mathbf{v}(X)$
called the {\em number of valleys} in the graph of $X$. The structure of two-sided cells is
described by the following:

\begin{theorem}\label{thm1}
{\hspace{2mm}}

\begin{enumerate}[$($a$)$]

\item\label{thm1.1} All band bimodules form a two-sided cell denoted  $\mathcal{J}_{\mathrm{band}}$.

\item\label{thm1.2} For each positive integer $k$, all string bimodules with $\mathbf{v}(X)=k$ form a
two-sided cell denoted  $\mathcal{J}_{k}$.

\item\label{thm1.3} All $\Bbbk$-split bimodules in the sense of \cite{MMZ} form 
a two-sided cell denoted  $\mathcal{J}_{\mathrm{split}}$.

\item\label{thm1.4} All string bimodules with $\mathbf{v}(X)=0$ which are not $\Bbbk$-split form a
two-sided cell denoted  $\mathcal{J}_{0}$.

\item\label{thm1.5} All two-sided cells are linearly ordered as follows:

\begin{displaymath}
\mathcal{J}_{\mathrm{split}}\geq_J \mathcal{J}_{0}\geq_J \mathcal{J}_{1}\geq_J \mathcal{J}_{2}
\geq_J \dots \geq_J  \mathcal{J}_{\mathrm{band}}.
\end{displaymath}
\end{enumerate}
\end{theorem}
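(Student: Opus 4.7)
The approach I would take is to leverage the special biserial structure of $A \otimes_{\Bbbk} A^{\mathrm{op}}$ to work explicitly with the string/band/projective-injective classification, then compute the tensor products combinatorially and track the valley invariant $\mathbf{v}$.

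First I would establish explicit combinatorial formulas for $X \otimes_A Y$ when $X = M(w)$ and $Y = M(w')$ are string bimodules. Since $A$ is radical square zero, every indecomposable $A$-module is either simple or a length-two uniserial projective, so $X \otimes_A Y$ decomposes by ``gluing'' the string $w$ to $w'$ through the shared middle vertex, with boundary behaviour determined by whether the endpoints of $w$ on the right and $w'$ on the left are tops or socles. This produces a direct sum whose summands are strings, possibly bands, and in degenerate cases the non-uniserial projective-injective bimodules. I would record the analogous formulas when one factor is a band, a $\Bbbk$-split bimodule, or a projective-injective.

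Second I would prove the key monotonicity: for string bimodules $X, Y$, every string summand $Z$ of $X \otimes_A Y$ satisfies $\mathbf{v}(Z) \geq \max\{\mathbf{v}(X),\mathbf{v}(Y)\}$, while there exist small auxiliary string bimodules of valley count zero that can be tensored on either side to move the left/right endpoints of the graph of $X$ without changing its valley count. Combined with the gluing formula, this gives the linear order in (\ref{thm1.5}) together with reachability within a cell: given any $X, Y$ with $\mathbf{v}(X)=\mathbf{v}(Y)=k$ one exhibits $Z, Z'$ of valley count zero such that $X$ is a summand of $Z\otimes_A Y\otimes_A Z'$, which proves (\ref{thm1.2}) and (\ref{thm1.4}). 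For (\ref{thm1.1}), tensoring a band with any bimodule that is not $\Bbbk$-split yields bands (or zero), and different bands are interchanged by suitable twist bands, hence bands form a single two-sided cell sitting at the bottom. For (\ref{thm1.3}), the $\Bbbk$-split condition of \cite{MMZ} is self-reproducing under tensor products and separates $\mathcal{J}_{\mathrm{split}}$ from the remaining valley-zero strings $\mathcal{J}_0$, placing $\mathcal{J}_{\mathrm{split}}$ at the top.

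The main obstacle will be the case analysis in the explicit gluing formula. The boundary behaviour of strings under $\otimes_A$ has several subcases depending on whether each endpoint of $w$ meets a source or a sink vertex and whether the adjacent arrow appears directly or formally inverted; each case can create or destroy a valley in the resulting graph. Proving that, across all cases, no summand has strictly fewer valleys than either factor while every target valley count is attainable is the delicate combinatorial point, and it is also what makes $\mathbf{v}$ a complete invariant of the two-sided cell for string bimodules. The separation of $\mathcal{J}_0$ from $\mathcal{J}_{\mathrm{split}}$ requires the extra ingredient that a $\Bbbk$-split bimodule can never appear as a summand of a tensor product that has a non-$\Bbbk$-split string factor, which follows from tracking dimensions of hom-spaces and the specific module-theoretic definition of $\Bbbk$-split from \cite{MMZ}.
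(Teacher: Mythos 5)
Your central combinatorial claim runs in the wrong direction, and this breaks the whole argument. You assert that every string summand $Z$ of $X\otimes_A Y$ satisfies $\mathbf{v}(Z)\geq\max\{\mathbf{v}(X),\mathbf{v}(Y)\}$. The truth (the paper's Proposition~\ref{prop27}) is the opposite: the number of valleys (and the width) of any string summand does not exceed that of $Y$, and the number of valleys (and the height) does not exceed that of $X$; tensoring can only destroy valleys, never create them. A concrete counterexample to your inequality is Lemma~\ref{lem31}: $M^{(k)}_{1|1}\otimes_A M^{(k)}_{1|1}$ has the summand $M^{(k-1)}_{1|2}$, with strictly fewer valleys, and it is exactly this drop that yields $\mathcal{J}_{k-1}\geq_J\mathcal{J}_k$ under the paper's convention ($[X]\geq_J[Y]$ means $X$ is a summand of $Z\otimes_A Y\otimes_A Z'$). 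With your reversed monotonicity you would derive the linear order in Theorem~\ref{thm1}\eqref{thm1.5} backwards, and your scheme for staying inside $\mathcal{J}_k$ (tensoring with valley-zero bimodules $Z,Z'$) needs the correct inequality anyway: one must show valleys do not drop in the specific products used, which the paper does via $\theta$-twists (tensoring with $A^{\theta}$) and the explicit Lemmata~\ref{lem32}--\ref{lem35} connecting the types $M,N,W,S$.

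Two further factual claims are also wrong. First, "tensoring a band with any non-$\Bbbk$-split bimodule yields bands (or zero)" is false: by Proposition~\ref{prop26}, a band tensored with a string gives strings of the same type, height and width as the string factor. This fact is not a nuisance but the very reason $\mathcal{J}_{\mathrm{band}}$ sits at the bottom: every string is reachable from any band, while no band is ever a summand of a product involving a non-band factor; your version would instead make the band cell incomparable to the string cells. Second, "a $\Bbbk$-split bimodule can never appear as a summand of a tensor product that has a non-$\Bbbk$-split string factor" is false as well: Case~1 in the proof of Proposition~\ref{prop27} produces projective-injective (hence $\Bbbk$-split) summands of a product of two string bimodules. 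The correct separation of $\mathcal{J}_{\mathrm{split}}$ from $\mathcal{J}_0$ goes the other way around: the additive closure of the $\Bbbk$-split bimodules is the unique minimal tensor ideal, so no non-split bimodule occurs in a product with a split factor, while split summands do arise from non-split ones; this places $\mathcal{J}_{\mathrm{split}}$ strictly at the top and simultaneously proves Theorem~\ref{thm1}\eqref{thm1.3}. The gluing picture you propose for string-times-string products is broadly the right computational tool (it is essentially the graph $\Gamma$ in the paper), but the conclusions you draw from it about valleys, bands and split summands are the reverse of what that computation gives.
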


Also following \cite{Jo,MZ}, all  non-$\Bbbk$-split
string bimodules can be divided into four different types, $M$, $W$, $N$ or $S$,
depending on the action graph. For each non-$\Bbbk$-split string bimodule, 
in Subsection~\ref{s2.4} 
we introduce three invariants: the {\em initial vertex}, {\em width} and {\em hight}.
The structure of left and right cells is then given by the following.

\begin{theorem}\label{thm2}

{\hspace{2mm}}

\begin{enumerate}[$($a$)$]

\item\label{thm2.1} The two-sided cell $\mathcal{J}_{\mathrm{band}}$ is also a left and a right cell.
\item\label{thm2.2} Left cells in  $\mathcal{J}_{\mathrm{split}}$ are indexed by indecomposable 
right $A$-modules. For an indecomposable  right $A$-module $N$, the left cell of $N$ consists of all
$M\otimes_{\Bbbk}N$, where $M$ is an indecomposable left $A$-module.
\item\label{thm2.3} Right cells in  $\mathcal{J}_{\mathrm{split}}$ are indexed by indecomposable 
left $A$-modules. For an indecomposable left $A$-module $M$, the right cell of $M$ consists of all
$M\otimes_{\Bbbk}N$, where $N$ is an indecomposable right $A$-module.
\item\label{thm2.4} For a non-negative integer $k$, a left cell in $\mathcal{J}_k$
consists of all bimodules in $\mathcal{J}_k$ which has the same second coordinate of the 
initial vertex and the same width. Two bimodules in the same left cell are necessary either 
of the same type or  of type $M$ or $N$ alternatively of type $W$ or $S$.
\item\label{thm2.5} For a non-negative integer $k$, a right cell in $\mathcal{J}_k$
consists of all bimodules in $\mathcal{J}_k$ which has the same first coordinate of the
initial vertex and the same height. Two bimodules in the same right same are necessarily either 
of the same type of or type $M$ or $S$ alternatively of type $W$ or $N$.
\item\label{thm2.6} All two-sided cells, with the exception of $\mathcal{J}_{\mathrm{band}}$, 
are strongly regular in the sense of \cite[Subsection~4.8]{MM1}.
\end{enumerate}
\end{theorem}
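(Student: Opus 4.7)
My plan is to proceed two-sided cell by two-sided cell, leaning on Theorem~\ref{thm1} as scaffolding and computing $X\otimes_AY$ of indecomposable bimodules explicitly enough to identify all indecomposable summands. For part~(\ref{thm2.1}), since $\mathcal{J}_{\mathrm{band}}$ is the minimum two-sided cell by Theorem~\ref{thm1}(\ref{thm1.5}), it suffices to show that any two band bimodules $B,B'$ satisfy $[B]\ge_L[B']$ and, symmetrically, $[B]\ge_R[B']$. Tensoring a band bimodule on either side with a suitable string or band bimodule produces a direct sum of band bimodules, and by choosing $Z$ adapted to the target band one can arrange for $B$ to occur as a summand of $Z\otimes_A B'$, giving~(\ref{thm2.1}).

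For the $\Bbbk$-split parts~(\ref{thm2.2}) and~(\ref{thm2.3}), I would exploit the associativity identity
\[
(M_1\otimes_{\Bbbk}N_1)\otimes_A(M_2\otimes_{\Bbbk}N_2)\;\cong\;M_1\otimes_{\Bbbk}(N_1\otimes_A M_2)\otimes_{\Bbbk}N_2.
\]
The middle factor $N_1\otimes_A M_2$ is merely a $\Bbbk$-vector space, so the tensor product decomposes as a direct sum of copies of $M_1\otimes_{\Bbbk}N_2$. Consequently, left-tensoring preserves the right factor $N$ and right-tensoring preserves the left factor $M$; varying the free factor realizes every target with the prescribed preserved factor. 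This directly yields the indexing of left cells by indecomposable right $A$-modules, and of right cells by indecomposable left $A$-modules, together with the explicit description of each cell.

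For the string cells~(\ref{thm2.4}) and~(\ref{thm2.5}) I would describe $X\otimes_A Y$ combinatorially using the standard recipe for tensor products of string modules over a special biserial algebra: summands of $X\otimes_A Y$ correspond to compatible gluings of the walks defining $X$ and $Y$ at a shared interior vertex. With this recipe in hand, I would track the three invariants from Subsection~\ref{s2.4} and verify that left-tensoring preserves the pair \emph{(second coordinate of the initial vertex, width)} for summands staying in $\mathcal{J}_k$, while the first coordinate and the height can be adjusted to hit any admissible target; the dual statement holds on the right. A parallel case analysis then traces how the type labels $M,W,N,S$ propagate through a tensor product, yielding the type restrictions stated in~(\ref{thm2.4}) and~(\ref{thm2.5}).

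Once parts~(\ref{thm2.2})--(\ref{thm2.5}) are in place, strong regularity in~(\ref{thm2.6}) becomes a bookkeeping statement: in each non-band two-sided cell, specifying a left cell and a right cell together pins down both coordinates of the initial vertex, the width, and the height, and additionally selects a unique type since each pair $\{M,N\},\{W,S\}$ meets each pair $\{M,S\},\{W,N\}$ in a single element, so the bimodule is determined up to isomorphism. The main obstacle throughout will be the explicit combinatorics in~(\ref{thm2.4}) and~(\ref{thm2.5}): with four string types and several qualitatively different gluing patterns, one must check case-by-case that the invariants behave as asserted \emph{and} that no summand lying in a strictly higher two-sided cell contaminates the argument. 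This bookkeeping, rather than any single conceptual idea, is the technical heart of the proof.
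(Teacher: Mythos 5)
Your overall route tracks the paper's (invariants of summands bound the cells from above, explicit tensor computations realize the cells from below), but two steps are not actually secured. First, in part (a) you propose to obtain a given band bimodule as a summand of $Z\otimes_A B'$ with $Z$ ``a suitable string or band bimodule''. A string $Z$ can never work: by the paper's Proposition~\ref{prop26}, tensoring a string bimodule with a band bimodule, on either side, decomposes into \emph{string} bimodules, so band summands only ever arise from band$\,\otimes\,$band products. The viable version of your step therefore rests entirely on the explicit decomposition of $B(k_1,m_1,\lambda_1)\otimes_A B(k_2,m_2,\lambda_2)$ (the paper's Proposition~\ref{prop21}, a Clebsch--Gordan type formula proved by reduction to Kronecker products of Jordan cells), which you neither state nor derive; without it, ``choosing $Z$ adapted to the target band'' is an unsupported assertion.

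Second, for parts (d)--(e) the positive half of the description --- that any two bimodules in $\mathcal{J}_k$ with the same second coordinate of the initial vertex and the same width are mutually $\geq_L$ --- is the technical heart, and your proposal only asserts it (``the first coordinate and the height can be adjusted to hit any admissible target''). What is actually needed is (i) the observation that two bimodules of the same type with the same right support differ by a twist by a power of $\theta$, i.e.\ by tensoring with a twisted regular bimodule, and (ii) explicit products exhibiting the type changes $M\leftrightarrow N$ and $W\leftrightarrow S$ with the right support and the number of valleys preserved --- the paper's Lemmata~\ref{lem32}--\ref{lem35}, e.g.\ that $N^{(k)}_{1|1}$ is a direct summand of $W^{(k)}_{1|1}\otimes_A M^{(k)}_{1|1}$. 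These computations are short but must be produced; ``a parallel case analysis'' is not yet a proof. Relatedly, your claim that left-tensoring ``preserves'' the pair (second coordinate, width) for summands staying in $\mathcal{J}_k$ is stronger than what the gluing combinatorics gives directly; the paper only uses that the right support of any summand of $X\otimes_A Y$ is contained in that of $Y$, and equality is then forced by the mutuality built into the definition of a left cell --- argue the necessity direction that way. Parts (b), (c) and (f) of your proposal are essentially the paper's arguments and are fine, with the small caveat that bounding the left cell in $\mathcal{J}_{\mathrm{split}}$ requires the identity $Z\otimes_A(M\otimes_{\Bbbk}N)\cong(Z\otimes_A M)\otimes_{\Bbbk}N$ for \emph{arbitrary} indecomposable $Z$, not only for $\Bbbk$-split $Z$ (immediate, but it should be said).
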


The paper is organized as follows: In Section~\ref{s2} we recall the classification of indecomposable
$A$-$A$-bimodules and collect all necessary preliminaries. After some preliminary computations of 
tensor products in Section~\ref{s3}, we  prove Theorem~\ref{thm1}
in Section~\ref{s4} and Theorem~\ref{thm2} in Section~\ref{s5}.

For the special case of $A$ being the algebra of dual numbers, some of the results of this 
paper were obtained in author's Master Thesis \cite{Jo}. 
In fact, in the case of dual numbers, \cite{Jo} provides detailed (very technical) explicit 
formulae for decomposition of tensor product of indecomposable bimodules which we decided not
to include in the present paper.
The results of this paper can also be seen as a generalization and an extension of the results of
\cite{MZ} which describes the cell combinatorics of the tensor category of bimodules over
the radical square zero quotient of a uniformly oriented Dynkin quiver of type $A_n$.
The results of this paper can also be compared with the results of \cite{GM1,GM2,Fo}.
\vspace{5mm}

\textbf{Acknowledgements:} This research is partially supported by
G{\"o}ran Gustafsson Stiftelse.

\section{Indecomposable $A$-$A$-bimodules}\label{s2}

\subsection{Quiver and relations for $A$-$A$-bimodules}\label{s2.1}

The category $A\text{-mod-}A$ is equivalent to the category $A\otimes A^{\text{op}}$-mod 
of finitely generated left $A\otimes A^{\text{op}}$-modules. The latter is equivalent 
to the category of modules over the path algebra of the discrete torus given by the 
following diagram, where we identify the first row with the last row and the first 
column with the last column, modulo the relations that 
\begin{itemize}
\item the composition of any 
two vertical arrows or horizontal arrows is $0$,
\item all squares commute.
\end{itemize}

\begin{equation}\label{eq1}
\xymatrix{  
1|1 \ar[d] & 1|2 \ar[l]\ar[d] & \ldots \ar[l] & 1|n \ar[l]\ar[d] & 1|1 \ar[l]\ar[d]\\
2|1 \ar[d] & 2|2 \ar[l]\ar[d] & \ldots \ar[l] & 2|n \ar[l]\ar[d] & 2|1 \ar[l]\ar[d]\\
\vdots \ar[d] & \vdots \ar[d] & \ddots & \vdots \ar[d] & \vdots \ar[d] \\
n|1 \ar[d] & n|2 \ar[l]\ar[d] & \ldots \ar[l]& n|n \ar[l]\ar[d] & n|1 \ar[l]\ar[d]\\
1|1 & 1|2 \ar[l] & \ldots \ar[l]& 1|n \ar[l] & 1|1 \ar[l]}
\end{equation}

Here indices of nodes are identified modulo $n$ (i.e. are in $\mathbb{Z}_n$),
vertical arrows correspond to the left component $A$ and horizontal arrows 
correspond to the right component $A^{\mathrm{op}}$.

We denote by $\varepsilon_i$ the primitive idempotent of $A$ corresponding to the vertex $i$.

If an $A$-$A$-bimodule $X$ is considered as a representation of \eqref{eq1}, we denote
by $X_{i|j}$ the value of $X$ at the vertex $i|j$.

\subsection{$\Bbbk$-split $A$-$A$-bimodules}\label{s2.2}

A $\Bbbk$-split $A$-$A$-bimodule, cf. \cite{MMZ}, is a bimodule of the form
$M\otimes_{\Bbbk} N$, where $M$ is a left $A$-module and $N$ is a right $A$-module. 
The bimodule $M\otimes_{\Bbbk} N$ is indecomposable if and only if both $M$ and $N$ are
indecomposable. The additive closure in $A$-mod-$A$ of all $\Bbbk$-split $A$-$A$-bimodules 
is the unique minimal tensor ideal. Therefore all indecomposable $\Bbbk$-split $A$-$A$-bimodules
belong to the same two-sided cell, denoted $\mathcal{J}_{\mathrm{split}}$.
This proves Theorem~\ref{thm1}\eqref{thm1.3}.

Note that 
\begin{displaymath}
M'\otimes_{\Bbbk}A\otimes_A  M\otimes_{\Bbbk} N\cong  \big(M'\otimes_{\Bbbk}N\big)^{\oplus \dim_{\Bbbk} M}.
\end{displaymath}
This implies that left cells in  $\mathcal{J}_{\mathrm{split}}$ are of the form
\begin{displaymath}
\{[M\otimes_{\Bbbk} N]\,:\, M\text{ is indecomposable}\} 
\end{displaymath}
and $N$ is fixed. This implies Theorem~\ref{thm2}\eqref{thm2.2} and
Theorem~\ref{thm2}\eqref{thm2.3} is obtained similarly.

As a special case of $\Bbbk$-split $A$-$A$-bimodules, we have 
projective-injective bimodules (they all are non-uniserial) which correspond to the 
commuting squares in the diagram \eqref{eq1}. For $n>1$ and fixed $i$ and $j$, the 
non-zero part of the corresponding 
projective-injective bimodule $P(i|j)\cong I(i+1|j-1)$ realized as a representation of the quiver
\eqref{eq1} looks as follows:
\begin{displaymath}
\xymatrix{  \Bbbk_{i|j-1} \ar[d]\ar[d]_{\mathrm{id}} & \Bbbk_{i|j} \ar[l]_{\mathrm{id}}\ar[d]^{\mathrm{id}} \\
\Bbbk_{i+1|j-1} & \Bbbk_{i+1|j} \ar[l]^{\mathrm{id}}}
\end{displaymath}
Here $\Bbbk_{s|t}$ denotes a copy of $\Bbbk$ at the vertex $s|t$. For $n=1$,
we have the following picture:
\begin{displaymath}
\xymatrix{  \Bbbk^4_{1|1} \ar[d]\ar[d]_{\varphi} & \Bbbk^4_{1|2} \ar[l]_{\psi}\ar[d]^{\varphi} \\
\Bbbk^4_{2|1} & \Bbbk^4_{2|2} \ar[l]^{\psi}}
\end{displaymath}
where $\varphi$ and $\psi$ are given by the respective matrices
\begin{displaymath}
\left(\begin{array}{cccc}0&0&0&0\\1&0&0&0\\0&0&0&0\\0&0&1&0\end{array}\right)\qquad\text{ and }\qquad
\left(\begin{array}{cccc}0&0&0&0\\0&0&0&0\\1&0&0&0\\0&1&0&0\end{array}\right)
\end{displaymath}

Another special case of $\Bbbk$-split $A$-$A$-bimodules are simple $A$-$A$-bimodules $L(i|j)$,
for fixed $i$ and $j$. The bimodule $L(i|j)$ is one-dimensional and the 
non-zero part of $L(i|j)$ realized as a representation of the quiver
\eqref{eq1} looks as $\Bbbk_{i|j}$.

Finally, we also have the bimodules $S_{i|j}^{(0)}$ and $N_{i|j}^{(0)}$ given by their respective
non-zero parts of realizations as a representation of the quiver \eqref{eq1}: 
\begin{displaymath}
\xymatrix{\Bbbk_{i|j-1} & \Bbbk_{i|j} \ar[l]_{\mathrm{id}}}\quad\quad\quad \text{and} 
\quad\quad\quad \xymatrix{\Bbbk_{i|j} \ar[d]^{\mathrm{id}} \\ \Bbbk_{i+1|j},}
\end{displaymath}
for $n>1$ and
\begin{displaymath}
\xymatrix{  \Bbbk^2_{1|1} \ar[d]\ar[d]_{0} & \Bbbk^2_{1|2} \ar[l]_{\varphi}\ar[d]^{0} \\
\Bbbk^2_{2|1} & \Bbbk^2_{2|2} \ar[l]^{\varphi}}\quad\quad \text{and}\quad\quad
\xymatrix{  \Bbbk^2_{1|1} \ar[d]\ar[d]_{\varphi} & \Bbbk^2_{1|2} \ar[l]_{0}\ar[d]^{\varphi} \\
\Bbbk^2_{2|1} & \Bbbk^2_{2|2} \ar[l]^{0}}
\end{displaymath}
where $\varphi$ denotes the matrix
\begin{equation}\label{eq2}
\left(\begin{array}{cc}0&0\\1&0\end{array}\right),
\end{equation}
for $n=1$.

\subsection{Band $A$-$A$-bimodules}\label{s2.3}
Band $A$-$A$-bimodules, as classified in \cite{BR,WW}, are $A$-$A$-bimodules $B(k,m,\lambda)$,
where $k\in\mathbb{Z}_n$, $m$ is a positive integer and $\lambda$ is a non-zero number.
We now recall their construction.
The foundational band $A$-$A$-bimodule is the regular $A$-$A$-bimodule ${}_AA_A=B(1,1,1)$.
For $n=1$ and $n=2$, here are the respective realizations of this bimodule as a representation of 
\eqref{eq1} (here $\varphi$ is given by \eqref{eq2}):
\begin{displaymath}
\xymatrix{  \Bbbk^2_{1|1} \ar[d]\ar[d]_{\varphi} & \Bbbk^2_{1|2} \ar[l]_{\varphi}\ar[d]^{\varphi} \\
\Bbbk^2_{2|1} & \Bbbk^2_{2|2} \ar[l]^{\varphi}}\quad\quad\text{ and }\quad\quad
\xymatrix{
\Bbbk_{1|1} \ar[d]\ar[d]_{\mathrm{id}} & \Bbbk_{1|2} \ar[l]_{0}\ar[d]^{0}
& \Bbbk_{1|3} \ar[l]_{\mathrm{id}}\ar[d]^{\mathrm{id}} \\
\Bbbk_{2|1}\ar[d]_{0} & \Bbbk_{2|2}\ar[d]_{\mathrm{id}} \ar[l]^{\mathrm{id}}
& \Bbbk_{2|3} \ar[d]_{0}\ar[l]^{0}\\
\Bbbk_{3|1} & \Bbbk_{3|2} \ar[l]^{0}& \Bbbk_{3|3} \ar[l]^{\mathrm{id}}
}
\end{displaymath}

For a non-zero $\lambda \in \Bbbk$, denote by $\eta_\lambda$ the automorphism of 
$A$ given by multiplying $\alpha_1$ with $\lambda$ and keeping all other basis elements of $A$ untouched.
For an $A$-$A$-bimodule $X$, 
we can consider the $A$-$A$-bimodule $X^{\eta_\lambda}$ in which the right action of $A$ is twisted
by $\eta_\lambda$. Then we have
\begin{displaymath}
B(1,1,\lambda)=B(1,1,1)^{\eta_\lambda}.
\end{displaymath}
For $n=2$, the realizations of $B(1,1,\lambda)$ as a representation of 
\eqref{eq1} is as follows:
\begin{displaymath}
\xymatrix{
\Bbbk_{1|1} \ar[d]\ar[d]_{\mathrm{id}} & \Bbbk_{1|2} \ar[l]_{0}\ar[d]^{0}
& \Bbbk_{1|3} \ar[l]_{\mathrm{id}}\ar[d]^{\mathrm{id}} \\
\Bbbk_{2|1}\ar[d]_{0} & \Bbbk_{2|2}\ar[d]_{\mathrm{id}} \ar[l]^{\lambda\mathrm{id}}
& \Bbbk_{2|3} \ar[d]_{0}\ar[l]^{0}\\
\Bbbk_{3|1} & \Bbbk_{3|2} \ar[l]^{0}& \Bbbk_{3|3} \ar[l]^{\mathrm{id}}
}
\end{displaymath}

The bimodule $B(1,m,\lambda )$ fits into a short exact sequence
\begin{displaymath}
0\to B(1,1,\lambda )\to B(1,m,\lambda )\to B(1,m-1,\lambda )\to 0.
\end{displaymath}
The definition of $B(1,m,\lambda )$ is best explained by the following example.
For $n=2$, the realizations of $B(1,m,\lambda)$ as a representation of 
\eqref{eq1} is as follows:
\begin{displaymath}
\xymatrix{
\Bbbk^m_{1|1} \ar[d]\ar[d]_{\mathrm{id}} & \Bbbk^m_{1|2} \ar[l]_{0}\ar[d]^{0}
& \Bbbk^m_{1|3} \ar[l]_{\mathrm{id}}\ar[d]^{\mathrm{id}} \\
\Bbbk^m_{2|1}\ar[d]_{0} & \Bbbk^m_{2|2}\ar[d]_{\mathrm{id}} \ar[l]^{J_m(\lambda)}
& \Bbbk^m_{2|3} \ar[d]_{0}\ar[l]^{0}\\
\Bbbk^m_{3|1} & \Bbbk^m_{3|2} \ar[l]^{0}& \Bbbk^m_{3|3} \ar[l]^{\mathrm{id}}
}
\end{displaymath}
where $J_m(\lambda)$ denotes the Jordan $m\times m$-cell with eigenvalue $\lambda$.
For $n=1$, the bimodule $B(1,m,\lambda)$ is given by
\begin{displaymath}
\xymatrix{  \Bbbk^{2m}_{1|1} \ar[d]\ar[d]_{\psi} & \Bbbk^{2m}_{1|2} \ar[l]_{\varphi}\ar[d]^{\psi} \\
\Bbbk^{2m}_{2|1} & \Bbbk^{2m}_{2|2} \ar[l]^{\varphi}}
\end{displaymath}
where $\psi$ and $\varphi$ are respectively given by the matrices 
\begin{displaymath}
\left(\begin{array}{cc}0&0\\E&0\end{array}\right)\quad\quad\text{ and }\quad\quad
\left(\begin{array}{cc}0&0\\J_m(\lambda)&0\end{array}\right)
\end{displaymath}
(here each matrix is a $2\times 2$ block matrix with $m\times m$ blocks and $E$ is the identity matrix).
Note that $B(1,m,\lambda )\cong B(1,m,1) ^{\eta _\lambda}$ for any $m$ and $\lambda$.

Finally, let $\theta$ denote the automorphism of $A$ given by the elementary rotation of the quiver which
sends each $\varepsilon_i$ to $\varepsilon_{i+1}$ and each $\alpha_i$ to $\alpha_{i+1}$.
Clearly, $\theta^n=\mathrm{Id}$. For an $A$-$A$-bimodule $X$ and any $k\in \mathbb{Z}_n$, 
we can consider the $A$-$A$-bimodule $X^{\theta^k}$ in which the right action of $A$ is twisted
by $\theta^k$. Then, for all $m$ and $\lambda$, we have
\begin{displaymath}
B(k,m,\lambda)\cong B(1,m,\lambda)^{\theta^{k-1}}.
\end{displaymath}
For $n=2$, the realizations of $B(2,1,1)$ as a representation of 
\eqref{eq1} is as follows:
\begin{displaymath}
\xymatrix{
\Bbbk_{1|1} \ar[d]\ar[d]_{0} & \Bbbk_{1|2} \ar[l]_{\mathrm{id}}\ar[d]^{\mathrm{id}}
& \Bbbk_{1|3} \ar[l]_{0}\ar[d]^{0} \\
\Bbbk_{2|1}\ar[d]_{\mathrm{id}} & \Bbbk_{2|2}\ar[d]_{0} \ar[l]^{0}
& \Bbbk_{2|3} \ar[d]_{\mathrm{id}}\ar[l]^{\mathrm{id}}\\
\Bbbk_{3|1} & \Bbbk_{3|2} \ar[l]^{\mathrm{id}}& \Bbbk_{3|3} \ar[l]^{0}
}
\end{displaymath}

Hence, any band $A$-$A$-bimodule can be constructed from $B(1,1,1)$ using extensions and twists by 
$\theta^k$ and $\eta_\lambda$.

\subsection{String $A$-$A$-bimodules}\label{s2.4}

String bimodules are best understood using covering techniques, see e.g. \cite{Ri}.
Consider the infinite quiver 
\begin{equation}\label{eq3}
\xymatrix{
\dots&\dots\ar[d]&\dots\ar[d]&\dots\ar[d]&\dots\\
\dots&\bullet\ar[d]\ar[l]&\bullet\ar[d]\ar[l]&\bullet\ar[d]\ar[l]&\dots\ar[l]\\
\dots&\bullet\ar[d]\ar[l]&\bullet\ar[d]\ar[l]&\bullet\ar[d]\ar[l]&\dots\ar[l]\\
\dots&\bullet\ar[d]\ar[l]&\bullet\ar[d]\ar[l]&\bullet\ar[d]\ar[l]&\dots\ar[l]\\
\dots&\dots&\dots&\dots&\dots\\
}
\end{equation}
with the vertex set $\mathbb{Z}^2$ and the same relations as in \eqref{eq1},
that is all squares commute and the composition of any two horizontal or any two vertical
arrows is zero. The group $\mathbb{Z}^2$ acts on this quiver such that the standard
generators act by horizontal and vertical shifts by $n$, respectively. In this way the
above quiver is a covering of \eqref{eq1} in the sense of \cite{Ri}.
We denote by $\Theta$ the usual functor from the category of finite dimensional
modules over \eqref{eq3} to the category of finite dimensional modules over \eqref{eq1}.
All indecomposable string $A$-$A$-bimodules are obtained from indecomposable
finite dimensional representations of \eqref{eq3} using $\Theta$.

The relevant representations of \eqref{eq3} are denoted $V(x,c,l)$, where the parameter
$v=i|j$ is a vertex of \eqref{eq3} such that $1\leq i,j\leq n$, 
the parameter $c$ (the {\em course})  takes values in $\{r,d\}$ where $r$ is a shorthand
for ``right'' and $d$ is a shorthand for ``down'', and, finally, $l$, the {\em length}, 
is a non-negative integer (if $l=0$, then the parameter $c$ has no value). 
The representation $V(x,c,l)$ has total dimension $l+1$ and is constructed as follows:
\begin{itemize}

\item start at the vertex $v$;

\item choose the initial course (or direction) given by $c$;

\item make an alternating right/down walk from $v$ of length $l$;

\item put a one-dimensional vector space at each vertex of this walk;

\item put the identity operator at each arrow of this walk;

\item set all the remaining vertices and arrows to zero. 

\end{itemize}
For example, the non-zero part of the module $V(1|2,r,3)$ is as follows:
\begin{displaymath}
\xymatrix{
\Bbbk_{1|2}&\Bbbk_{1|3}\ar[l]_{\mathrm{id}}\ar[d]_{\mathrm{id}}&\\
&\Bbbk_{2|3}&\Bbbk_{2|4}\ar[l]_{\mathrm{id}}
}
\end{displaymath}

Note that $\Theta(V(v,c,l))$ is $\Bbbk$-split if $l\leq 1$. Therefore
from now one we assume that $l\geq 2$. We will say that
\begin{itemize}

\item $V(v,c,l)$ if {\em of type $M$} if $c=r$ and $l$ is even;

\item $V(v,c,l)$ if {\em of type $N$} if $c=r$ and $l$ is odd;

\item $V(v,c,l)$ if {\em of type $W$} if $c=d$ and $l$ is even;

\item $V(v,c,l)$ if {\em of type $S$} if $c=d$ and $l$ is odd.

\end{itemize}

Here are examples of modules of respective types $M$ and $N$:
\begin{displaymath}
\xymatrix{
\Bbbk_{1|1}&\Bbbk_{1|2}\ar[l]_{\mathrm{id}}\ar[d]_{\mathrm{id}}&\\
&\Bbbk_{2|2}&\Bbbk_{2|3}\ar[l]_{\mathrm{id}}\ar[d]_{\mathrm{id}}\\
&&\Bbbk_{3|3}\\
}\qquad
\xymatrix{
\Bbbk_{1|1}&\Bbbk_{1|2}\ar[l]_{\mathrm{id}}\ar[d]_{\mathrm{id}}&\\
&\Bbbk_{2|2}&\Bbbk_{2|3}\ar[l]_{\mathrm{id}}
}
\end{displaymath}

Here are examples of modules of respective types $W$ and $S$:

\begin{displaymath}
\xymatrix{
\Bbbk_{1|1}\ar[d]_{\mathrm{id}}&&\\
\Bbbk_{2|1}&\Bbbk_{2|2}\ar[l]_{\mathrm{id}}\ar[d]_{\mathrm{id}}&\\
&\Bbbk_{3|2}&\Bbbk_{3|3}\ar[l]_{\mathrm{id}}\\
}\qquad
\xymatrix{
\Bbbk_{1|1}\ar[d]_{\mathrm{id}}&\\
\Bbbk_{2|1}&\Bbbk_{2|2}\ar[l]_{\mathrm{id}}\ar[d]_{\mathrm{id}}\\
&\Bbbk_{3|2}\\
}
\end{displaymath}

The vertex $v$ of $V(v,c,l)$ is called the {\em initial vertex}. The {\em width}
of $V(v,c,l)$ is the number of non-zero columns. For modules of type $M$ and $N$,
the width is given by $\lceil\frac{l+2}{2}\rceil$. For modules of type $W$ and $S$,
the width is given by $\lfloor\frac{l+2}{2}\rfloor$.

The {\em height} of $V(v,c,l)$ is the number of non-zero rows. For modules of type $M$ and $N$,
the height is given by $\lfloor\frac{l+2}{2}\rfloor$. For modules of type $W$ and $S$,
the height is given by $\lceil\frac{l+2}{2}\rceil$.

A {\em valley} of a module is a vertex of the form
\begin{displaymath}
\xymatrix{\dots\ar[d]&\\\bullet&\ar[l]\dots} 
\end{displaymath}
where both incoming arrows are non-zero. The number of valleys in $V$ is denoted
$\mathbf{v}(V)$. We have
\begin{displaymath}
\mathbf{v}(V(v,c,l))=
\begin{cases}

\lfloor\frac{l-1}{2}\rfloor,& c=r;\\ 

\lfloor\frac{l}{2}\rfloor,& c=d. 
\end{cases}
\end{displaymath}

Note that the module $V(v,c,l)$ is uniquely determined by its type, $v$ and the
number of valleys. For any type $X\in\{M,N,W,S\}$, we denote by 
$X^{(k)}_{i\vert j}$ the $A$-$A$-bimodule obtained using $\Theta$ from the
corresponding representation of \eqref{eq3} of type $X$, the initial vertex $i|j$
and having $k$ valleys.

From \cite{BR,WW} it follows that any indecomposable $A$-$A$-bimodule is isomorphic to one of the
bimodules defined in this section (i.e. is $\Bbbk$-split, a band or a string bimodule).

\section{Preliminary computations of tensor products}\label{s3}

\subsection{Tensoring band bimodules}\label{s3.1}
The aim of this subsection is to prove the following explicit result.

\begin{proposition}\label{prop21}
For all possible values of parameters, we have
\begin{displaymath}
B(k_1,m_1,\lambda_1)\otimes_A B(k_2,m_2,\lambda_2)\cong
\bigoplus_{s}B(k_1+k_2,s,\lambda_1\lambda_2),
\end{displaymath}
where $s$ runs through the set 
\begin{equation}\label{eq5}
\{|m_1-m_2|+1,|m_1-m_2|+3,\dots,m_1+m_2-1\}.
\end{equation}
\end{proposition}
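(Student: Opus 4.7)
The plan is to reduce the general tensor product to the canonical untwisted case $B(1,m_1,1)\otimes_A B(1,m_2,1)$ by stripping off the automorphism twists $\theta^{k_i-1}\eta_{\lambda_i}$, and then to recognize the remaining computation as the classical Clebsch--Gordan decomposition of Jordan blocks. First I would record the standard bimodule identities
\[
X\otimes_A Y^\tau\cong (X\otimes_A Y)^\tau\quad\text{and}\quad X^\sigma\otimes_A Y\cong X\otimes_A {}^{\sigma^{-1}}Y
\]
for algebra automorphisms $\sigma,\tau$ of $A$, and establish the symmetry ${}^\sigma B(1,m,1)\cong B(1,m,1)^{\sigma^{-1}}$ for every $\sigma$ in the abelian subgroup of $\mathrm{Aut}(A)$ generated by $\theta$ and the $\eta_\lambda$. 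For $m=1$ this is the familiar ${}^\sigma A\cong A^{\sigma^{-1}}$, realised by $x\mapsto\sigma^{-1}(x)$; for general $m$ it follows by induction using the short exact sequences
\[
0\to B(1,1,\lambda)\to B(1,m,\lambda)\to B(1,m-1,\lambda)\to 0
\]
recalled in Subsection~\ref{s2.3} and the exactness of the twist functors. Combining these facts and using that $\theta$ commutes with every $\eta_\lambda$, the left-hand side becomes
\[
\big(B(1,m_1,1)\otimes_A B(1,m_2,1)\big)^{\theta^{k_1+k_2-2}\eta_{\lambda_1\lambda_2}},
\]
so it suffices to handle the canonical case.

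For the canonical case I would compute $B(1,m_1,1)\otimes_A B(1,m_2,1)$ vertex by vertex in the quiver \eqref{eq1} as $\bigoplus_k (B(1,m_1,1))_{i|k}\otimes_\Bbbk (B(1,m_2,1))_{k|j}$ modulo the $A$-balancing relations. At each vertex in the support of a band bimodule, the balancing collapses this direct sum to a single copy of $\Bbbk^{m_1}\otimes_\Bbbk\Bbbk^{m_2}$, so the dimension vector already matches that of a direct sum of band bimodules of total Jordan dimension $m_1m_2$. Since a band bimodule $B(1,m,\mu)$ with fixed support is determined up to isomorphism by its monodromy operator around the band cycle, the tensor product is determined by the monodromy of the tensor, which a straightforward computation shows to equal $J_{m_1}(1)\otimes J_{m_2}(1)$ acting on $\Bbbk^{m_1}\otimes\Bbbk^{m_2}$. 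The classical Clebsch--Gordan formula for Jordan blocks,
\[
J_{m_1}(1)\otimes J_{m_2}(1)\cong\bigoplus_s J_s(1),
\]
with $s$ running over the set \eqref{eq5}, then yields the asserted decomposition into summands $B(1,s,1)$; applying the twist $\theta^{k_1+k_2-2}\eta_{\lambda_1\lambda_2}$ recovers $B(k_1+k_2,s,\lambda_1\lambda_2)$ in the paper's convention (with $k_1+k_2$ read in $\mathbb{Z}_n$).

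The main obstacle lies in the explicit identification of the monodromy in the second step: one has to track a basis of the tensor product around the band cycle while systematically applying the $A$-balancing relations, in order to confirm that the cycle operator really is $J_{m_1}(1)\otimes J_{m_2}(1)$ and not some other operator of the same total dimension. This bookkeeping is delicate because the cases $n=1$, $n=2$ and $n\geq 3$ use slightly different explicit realisations of $B(1,m,1)$ (recorded in Subsection~\ref{s2.3}); however, in each case the underlying linear-algebra content is identical, which is why a single Clebsch--Gordan formula governs the answer.
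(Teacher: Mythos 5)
Your overall strategy coincides with the paper's: strip the $\theta$- and $\eta$-twists to reduce to $B(1,m_1,1)\otimes_A B(1,m_2,1)$ (this is the content of Lemmata~\ref{lem22} and~\ref{lem23}) and then recognise the answer as the Clebsch--Gordan decomposition of Jordan blocks via \cite[Theorem~2]{DH}. The only real divergence is in the second stage: the paper uses the fusion rules to reduce by induction to the single case $B(1,2,1)\otimes_A B(1,m,1)$ and computes that case in an explicit basis, whereas you propose to treat general $m_1,m_2$ at once by identifying the cycle (monodromy) operator of the product; both routes work, yours trading the induction for somewhat heavier bookkeeping, which you correctly single out as the main content.

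However, two steps of your reduction do not hold up as written. First, the claim ${}^{\sigma}B(1,m,1)\cong B(1,m,1)^{\sigma^{-1}}$ cannot be obtained by induction from the short exact sequences $0\to B(1,1,\lambda)\to B(1,m,\lambda)\to B(1,m-1,\lambda)\to 0$ alone: twisting the sequence and invoking the inductive hypothesis identifies the two end terms, but a non-split extension is not determined by its end terms, so you cannot conclude that the middle terms agree without an extra input (for instance that the relevant $\operatorname{Ext}^1$-space is one-dimensional). The paper's Lemma~\ref{lem23} circumvents this by a cleaner invariant: an indecomposable band bimodule is pinned down by its dimension vector together with the trace of the cycle operator $R$, and one checks directly how $\theta$- and $\eta_\mu$-twists affect these data; the same invariant, in its stronger form (the conjugacy class of $R$ determines a direct sum of band bimodules supported on a fixed band), is also what your monodromy argument needs. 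Second, related to that argument: matching dimension vectors does not by itself show that $B(1,m_1,1)\otimes_A B(1,m_2,1)$ is a direct sum of band bimodules (string or $\Bbbk$-split summands could a priori account for the same dimensions); you must first verify that all structure maps around the cycle are invertible, which indeed falls out of the same basis computation the paper performs for $m_1=2$ (all maps are identities except the right action of $\alpha_1$, which is the Kronecker product $J_{m_1}(1)\otimes_{\Bbbk}J_{m_2}(1)$), and only then does the monodromy determine the decomposition. A small further point: $\theta$ and $\eta_\lambda$ do not commute in $\mathrm{Aut}(A)$ (conjugating $\eta_\lambda$ by $\theta$ rescales $\alpha_2$, not $\alpha_1$); they commute only up to an inner automorphism, which is harmless for isomorphism classes of twisted bimodules but should be stated.
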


To prove Proposition~\ref{prop21}, we need some preparation. For any $A$-$A$-bimodule
$X$ and any automorphism $\varphi$ of $A$ we denote by ${}^{\varphi}X$ and
$X^{\varphi}$ the bimodule obtained from $X$ by twisting the left (resp. right)
action of $A$ by $\varphi$. Note that, for indecomposable $X$, both ${}^{\varphi}X$ and
$X^{\varphi}$ are indecomposable.

\begin{lemma}\label{lem22}
For any $A$-$A$-bimodules $X$ and $Y$, we have
$X^{\varphi}\otimes_A Y\cong X\otimes_A {}^{\varphi^{-1}}Y$.
\end{lemma}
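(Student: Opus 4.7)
The plan is to exhibit an explicit isomorphism that is the identity on elementary tensors, and to check that the twist by $\varphi$ on the right of $X$ is exactly compensated by the twist by $\varphi^{-1}$ on the left of $Y$. The content of the lemma is purely formal; there is no hard computation, only bookkeeping of which action has been twisted.

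First I would unpack the definitions. The bimodule $X^{\varphi}$ has the same underlying $\Bbbk$-space and the same left action as $X$, but the right action is $x \cdot_{\varphi} a := x \cdot \varphi(a)$. Dually, ${}^{\varphi^{-1}}Y$ has the same underlying space and right action as $Y$, and left action $a \cdot_{\varphi^{-1}} y := \varphi^{-1}(a) \cdot y$. In particular, the outer (untwisted) $A$-$A$-bimodule structures on $X^{\varphi} \otimes_A Y$ and on $X \otimes_A {}^{\varphi^{-1}}Y$ agree: the left action comes from $X$, the right action from $Y$, in both cases via the original (untwisted) operations.

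Next I would define a $\Bbbk$-bilinear map $f \colon X \times Y \to X \otimes_A {}^{\varphi^{-1}}Y$ by $f(x,y) = x \otimes y$ and verify that it is $A$-balanced with respect to the bimodule structures of $X^{\varphi}$ and $Y$. The required balancing reads $x \cdot_{\varphi} a \otimes y = x \otimes a \cdot y$, i.e.\ $x \cdot \varphi(a) \otimes y = x \otimes a \cdot y$. On the other hand, the tensor relation in $X \otimes_A {}^{\varphi^{-1}}Y$ is $x \cdot b \otimes y = x \otimes b \cdot_{\varphi^{-1}} y = x \otimes \varphi^{-1}(b) \cdot y$ for every $b \in A$. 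Substituting $b = \varphi(a)$ yields exactly the identity required above, so $f$ descends to a well-defined $A$-$A$-bimodule homomorphism $\bar f \colon X^{\varphi} \otimes_A Y \to X \otimes_A {}^{\varphi^{-1}}Y$.

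Finally, I would run the same argument in reverse to produce an inverse. Define $g \colon X \times Y \to X^{\varphi} \otimes_A Y$ by $g(x,y) = x \otimes y$; the check that $g$ is $A$-balanced with respect to $X$ and ${}^{\varphi^{-1}}Y$ is the mirror of the previous one (using that $(\varphi^{-1})^{-1} = \varphi$). The induced map $\bar g$ is inverse to $\bar f$ on elementary tensors, hence on all of the tensor product, completing the proof. The only potential obstacle is making sure the bimodule structures on the two sides are the same untwisted ones, so that $\bar f$ is not merely a linear isomorphism but a bimodule map; this is built into the definitions as noted above.
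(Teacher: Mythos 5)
Your proof is correct and follows essentially the same route as the paper: the identity map on elementary tensors, with the balancing relation $x\varphi(a)\otimes y = x\otimes \varphi^{-1}(\varphi(a))y$ showing it is well defined in both directions, hence a bimodule isomorphism. The paper's argument is just a condensed version of your more explicit bookkeeping.
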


\begin{proof}
The correspondence $x\otimes y\mapsto x\otimes y$ from $X^{\varphi}\otimes_A Y$ to 
$X\otimes_A {}^{\varphi^{-1}}Y$ induced a  well-defined map due to the fact that 
$x\varphi(a)\otimes y=x\otimes ay=x\otimes \varphi^{-1}(\varphi(a))y$, 
for all $x\in X$, $y\in Y$ and $a\in A$.
Being well defined, this map is, obviously, an isomorphism of bimodules.
\end{proof}

\begin{lemma}\label{lem23}
For all possible values of parameters, we have
\begin{displaymath}
{}^{\theta}B(k,m,\lambda)\cong B(k-1,m,\lambda)
\quad\text{ and }\quad{}^{\eta_\mu}B(k,m,\lambda)\cong B(k,m,\lambda\mu^{-1}). 
\end{displaymath}
\end{lemma}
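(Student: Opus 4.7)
The plan is to reduce both claims to the case $k=1$ using the paper's definition together with the commutativity of the left- and right-twist functors, and then verify the two reduced statements separately.

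Since ${}^{\varphi}(X^{\psi})\cong({}^{\varphi}X)^{\psi}$ for all automorphisms $\varphi,\psi$ (the two twists act on disjoint sides of the bimodule structure), the definition $B(k,m,\lambda)=B(1,m,\lambda)^{\theta^{k-1}}$ immediately reduces part (a) to ${}^{\theta}B(1,m,\lambda)\cong B(1,m,\lambda)^{\theta^{-1}}$ and part (b) to ${}^{\eta_\mu}B(1,m,\lambda)\cong B(1,m,\lambda\mu^{-1})$.

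For part (a), the main step is to show ${}^{\theta}B(1,m,1)\cong B(1,m,1)^{\theta^{-1}}$ by induction on $m$. The base case $m=1$ is the standard isomorphism ${}^{\theta}A\cong A^{\theta^{-1}}$ given by $a\mapsto\theta^{-1}(a)$. For the inductive step, applying the exact functor ${}^{\theta}(-)$ to the sequence $0\to B(1,1,1)\to B(1,m,1)\to B(1,m-1,1)\to 0$ places ${}^{\theta}B(1,m,1)$ in the same extension position as $B(1,m,1)^{\theta^{-1}}$, and the uniqueness of the indecomposable extension within the class of band bimodules identifies the two. To pass from $\lambda=1$ to general $\lambda$, write $B(1,m,\lambda)=B(1,m,1)^{\eta_\lambda}$ so that ${}^{\theta}B(1,m,\lambda)\cong B(1,m,1)^{\theta^{-1}\circ\eta_\lambda}$, and observe by direct computation on generators that $\theta^{-1}\eta_\lambda\theta\eta_\lambda^{-1}$ is the inner automorphism of $A$ implemented by the unit $\varepsilon_1+\lambda^{-1}(\varepsilon_2+\cdots+\varepsilon_n)$, so $B(1,m,1)^{\theta^{-1}\eta_\lambda}\cong B(1,m,1)^{\eta_\lambda\theta^{-1}}=B(1,m,\lambda)^{\theta^{-1}}$ via right multiplication by the implementing unit.

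For part (b), I would work directly with the representation of the quiver \eqref{eq1}. The bimodule ${}^{\eta_\mu}B(1,m,\lambda)$ differs from $B(1,m,\lambda)$ exactly in that each matrix realising the left action of $\alpha_1$ is multiplied by $\mu$. Rescaling the basis at each vertex $(i+1)|j$ that receives such a $\mu\,\mathrm{id}$ vertical arrow by the factor $\mu^{-1}$ restores those arrows to identities, at the cost of introducing a factor $\mu^{-1}$ on the unique horizontal arrow carrying the Jordan block $J_m(\lambda)$. A subsequent diagonal conjugation of that block by $\mathrm{diag}(1,\mu,\mu^2,\ldots,\mu^{m-1})$ converts $\mu^{-1}J_m(\lambda)=\lambda\mu^{-1}I+\mu^{-1}N$ into $\lambda\mu^{-1}I+N=J_m(\lambda\mu^{-1})$, matching the canonical representation of $B(1,m,\lambda\mu^{-1})$.

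The main obstacle is the global coherence of these identifications: in part (a), one must pin down the extension class within the relevant $\mathrm{Ext}^1$ group for band bimodules, while in part (b) one has to verify that the sequence of rescalings and conjugations respects the cyclic identifications of the fundamental domain of \eqref{eq1} and to handle the $n=1$ case, where the representation involves the block matrices $\varphi,\psi$ of Subsection~\ref{s2.3} in place of scalar identities.
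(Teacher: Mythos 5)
Your reduction to $k=1$ via commutation of the two twists is fine, the base case ${}^{\theta}A\cong A^{\theta^{-1}}$ is correct, and the inner-automorphism observation used to pass from $\lambda=1$ to general $\lambda$ is a genuinely nice (and correct) point. The problem is the inductive step for the $\theta$-twist. From exactness of ${}^{\theta}(-)$ you only get that ${}^{\theta}B(1,m,1)$ is an indecomposable middle term of some non-split extension of $B(1,m-1,1)^{\theta^{-1}}$ by $B(1,1,1)^{\theta^{-1}}$; the assertion that this pins the middle term down ("uniqueness of the indecomposable extension within the class of band bimodules") is exactly the content you would need to prove, and nothing established in the paper up to this point provides it. You would need (i) that every non-split extension of these two bimodules with indecomposable middle has middle $B(1,m,1)^{\theta^{-1}}$, which amounts to $\dim\mathrm{Ext}^1\bigl(B(1,m-1,1)^{\theta^{-1}},B(1,1,1)^{\theta^{-1}}\bigr)=1$ (a standardness-of-tubes type statement for the special biserial algebra $A\otimes_{\Bbbk}A^{\mathrm{op}}$), and (ii) that the middle cannot a priori be a string or $\Bbbk$-split bimodule. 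Point (ii) is not innocuous: your phrase "within the class of band bimodules" presupposes that ${}^{\theta}B(1,m,1)$ is a band bimodule, and for $n=1$ the torus quiver has a single vertex, so dimension vectors do not separate string from band bimodules and this requires a separate argument. Establishing (i)--(ii) is at least as much work as the lemma itself, so as written this is a genuine gap rather than a routine omission.

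For part (b) your explicit rescaling argument is correct for $n>1$ (only the vertical arrow realising the left action of $\alpha_1$ is scaled, the rescaling at its target touches only the horizontal arrow carrying $J_m(\lambda)$, and the diagonal conjugation turns $\mu^{-1}J_m(\lambda)$ into $J_m(\lambda\mu^{-1})$), but you only flag the $n=1$ case instead of carrying it out; it is routine but should be done, since the matrices $\varphi,\psi$ replace the scalar identities. For comparison, the paper's proof is much shorter and sidesteps all of the above: an indecomposable band bimodule is determined by its dimension vector together with the trace of the monodromy operator $R$, and one simply observes that ${}^{\theta}(-)$ permutes the dimension vector as required without changing the trace, while ${}^{\eta_\mu}(-)$ fixes the dimension vector and scales the eigenvalue by $\mu^{-1}$. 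Equivalently, for your part (a) a direct argument is available with no induction at all: as a representation of \eqref{eq1}, the left twist by $\theta$ is just a relabelling of the rows, which visibly carries the defining representation of $B(k,m,\lambda)$ to that of $B(k-1,m,\lambda)$.
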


\begin{proof}
An indecomposable band $A$-$A$-bimodule $X$ is uniquely defined by its {\em dimension vector}
(as a representation of \eqref{eq1}) together with the trace of the linear operator
\begin{displaymath}
R:=(\cdot\alpha_{j-1}^{-1})(\alpha_n\cdot)(\cdot\alpha_{j-2}^{-1})\dots 
(\alpha_2\cdot)(\cdot\alpha_j^{-1})(\alpha_1\cdot)  
\end{displaymath}
on the vector space $X_{1|j}$, where $j$ denotes the unique element  in $\mathbb{Z}_n$ for 
which we have $\alpha_1\cdot X_{1|j}\neq 0$.

Now, the isomorphism ${}^{\theta}B(k,m,\lambda)\cong B(k-1,m,\lambda)$ follows
by comparing the dimension vectors of $B(k,m,\lambda)$ and $B(k-1,m,\lambda)$
using the definition of $\theta$ and also noting that the twist by $\theta$
does not affect the trace of $R$.

Similarly, the isomorphism $\quad{}^{\eta_\mu}B(k,m,\lambda)\cong B(k,m,\lambda\mu^{-1})$
follows by noting that the twist by $\eta_\mu$ does not affect the dimension vector
but it does multiply the eigenvalue of $R$ by $\mu^{-1}$. The claim follows.
\end{proof}

\begin{proof}[Proof of Proposition~\ref{prop21}.]

Using Lemmata~\ref{lem22} and \ref{lem23},  in the product 
\begin{displaymath}
B(k_1,m_1,\lambda_1)\otimes_A B(k_2,m_2,\lambda_2) 
\end{displaymath}
we can move all involved twists by automorphism
to the right. Hence the claim of Proposition~\ref{prop21}
reduces to the case
\begin{equation}\label{eq6}
B(1,m_1,1)\otimes_A B(1,m_2,1)\cong\bigoplus_{s}B(1,s,1),
\end{equation}
where $s$ runs through \eqref{eq5}. The latter formula should be compared
to the classical formula for the tensor product of Jordan cells,
see \cite[Theorem~2]{DH},
\begin{displaymath}
J_{m_1}(1)\otimes_{\Bbbk} J_{m_2}(1)\cong \bigoplus_{s}J_{s}(1)
\end{displaymath}
where $s$ runs through \eqref{eq5}; and also to the formula for tensoring
simple finite dimensional $\mathfrak{sl}_2$-modules, see e.g. \cite[Theorem~1.39]{Ma}.

This comparison implies that, by induction, \eqref{eq6} reduces to the case
\begin{displaymath}
B(1,2,1)\otimes_A B(1,m,1)\cong 
\begin{cases}
B(1,2,1),& m=1;\\
B(1,m-1,1)\oplus B(1,m+1,1), & m>1;
\end{cases}
\end{displaymath}
and the symmetric case $B(1,m,1)\otimes_A B(1,2,1)$. The latter is similar to the former and
left to the reader. The case $m=1$ is obvious as $B(1,1,1)$ is the regular $A$-$A$-bimodule.
We now consider the case $m>1$. We also assume $n>1$, the case $n=1$
is similar but (as we have already seen several time) requires a change of notation
as, for example, the dimension vector of the regular bimodule does not fit into the
$n>1$ pattern.

Let $a_{i|i}^{(1)}$, $a_{i|i}^{(2)}$, $a_{i+1|i}^{(1)}$, $a_{i+1|i}^{(2)}$,
where $i\in\mathbb{Z}_n$, be the elements of the standard basis of $B(1,2,1)$ 
at the vertices $i|i$ and $i+1|i$, respectively (the value of the bimodule $B(1,2,1)$
at all other verities is zero). Similarly, let 
$b_{i|i}^{(s)}$ and $b_{i+1|i}^{(s)}$, for $s=1,2,\dots,m$,
be elements of the standard basis of $B(1,m,1)$. Then a basis of $B(1,2,1)\otimes_A B(1,m,1)$,
which we will call {\em standard}, is given by all elements of the form 
\begin{equation}\label{eq7}
a_{i|i}^{(1)}\otimes b_{i|i}^{(s)},\quad
a_{i|i}^{(2)}\otimes b_{i|i}^{(s)},\quad
a_{i+1|i}^{(1)}\otimes b_{i|i}^{(s)},\quad
a_{i+1|i}^{(2)}\otimes b_{i|i}^{(s)}.
\end{equation}

That the left action of every $\alpha_i$ in this basis is given by the identity operator
follows from the corresponding property for $B(1,2,1)$. A similar, but slightly more involved
observation is that the right action of every $\alpha_i$, where $i\neq 1$, is also given by 
the identity operator. This follows from the corresponding property for $B(1,m,1)$ and also
the fact that the left such $\alpha_i$ on any side of both $B(1,2,1)$ and $B(1,m,1)$
is given by the identity operator.

It remains to compute the right action of $\alpha_1$. It is given by $J_2(1)$ on $B(1,2,1)$
and by $J_m(1)$ on $B(1,m,1)$. To compute the right action on an element of the form
$a_{2|2}^{(t)}\otimes b_{2|2}^{(s)}$, we first apply the right action on $B(1,m,1)$
which acts on the $b_{2|2}^{(s)}$-component via $J_m(1)$. The outcome is a linear combination
of elements of the form $a_{2|2}^{(t)}\otimes b_{2|1}^{(s)}$ which now have to be
rewritten in the basis \eqref{eq7}. For this we write each $b_{2|1}^{(s)}$ as
$\alpha_1\cdot b_{1|1}^{(s)}$ and move $\alpha_1$ through the tensor product sign.
The element $\alpha_1$ acts on $a_{2|2}^{(t)}$ from the right via $J_2(1)$.
Altogether, we get exactly the Kronecker product of Jordan cells $J_2(1)\otimes_{\Bbbk}J_m(1)$
written in the basis \eqref{eq7}. Using the classical decomposition of the
latter, see \cite[Theorem~2]{DH}, the necessary result follows.
\end{proof}

\subsection{Tensoring string bimodules with band bimodules}\label{s3.2}
The aim of this subsection is to prove the following result.

\begin{proposition}\label{prop26}
Let $X$ be a string $A$-$A$-bimodule and $Y$ a band $A$-$A$-bimodule.

\begin{enumerate}[$($a$)$]
\item\label{prop26.1}
The bimodule $X\otimes_A Y$ decomposes as a direct sum of string $A$-$A$-bimodules,
moreover, both the type, height and width of any summand coincides with that of $X$.
\item\label{prop26.2}
The bimodule $Y\otimes_A X$ decomposes as a direct sum of string $A$-$A$-bimodules,
moreover, both the type, height and width of any summand coincides with that of $X$.
\end{enumerate} 
\end{proposition}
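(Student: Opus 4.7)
The plan is to reduce the statement to the simplest band bimodule and then induct on its Jordan size. First, using Lemmata~\ref{lem22} and \ref{lem23}, we write $B(k,m,\lambda) \cong B(1,m,1)^{\theta^{k-1}\eta_\lambda}$ and push the twists $\theta^{k-1}$ and $\eta_\lambda$ across the tensor product onto the other factor. Twisting a string bimodule by $\theta^{\pm 1}$ only shifts its initial vertex, while twisting by $\eta_\lambda$ does not alter the underlying representation of \eqref{eq1} up to isomorphism; in particular, type, height and width are preserved by these operations. Hence it suffices to treat $Y=B(1,m,1)$. The base case $m=1$ is immediate since $B(1,1,1)={}_AA_A$ is the tensor unit.

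For the inductive step we use the short exact sequence $0 \to B(1,1,1) \to B(1,m,1) \to B(1,m-1,1) \to 0$ from Section~\ref{s2.3}. Direct inspection of the representation describing $B(1,m,1)$ shows that as a left $A$-module it is free of rank $m$: every down-arrow acts by the identity on $\Bbbk^m$, so each column of the picture splits as $m$ copies of the corresponding indecomposable projective. An analogous basis change absorbing the single Jordan block $J_m(\lambda)$ on the right shows that $B(1,m,1)$ is also free of rank $m$ as a right $A$-module. Consequently the above sequence splits as a sequence of left $A$-modules, and separately as a sequence of right $A$-modules; tensoring with $X$ on the left therefore yields a left-$A$-module isomorphism
$$X\otimes_A B(1,m,1) \;\cong\; X \,\oplus\, \bigl(X\otimes_A B(1,m-1,1)\bigr),$$
which by induction is $X^{\oplus m}$ as a left $A$-module. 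In particular, $\dim_\Bbbk(X\otimes_A B(1,m,1)) = m\cdot \dim_\Bbbk X$.

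To upgrade this to a bimodule decomposition into string bimodules with the advertised invariants, we invoke the Krull--Schmidt theorem together with the classification recalled in Section~\ref{s2}: every indecomposable summand is $\Bbbk$-split, band, string, or projective-injective. Comparing the known left-module restrictions of each of these classes with the constraint that the summands must collectively restrict to $X^{\oplus m}$ as a left $A$-module --- and using the analogous right-module constraint coming from the right freeness of $B(1,m,1)$ --- rules out band, $\Bbbk$-split and projective-injective summands, and forces each indecomposable summand to be a string bimodule whose graph has the same type, height and width as that of $X$, up to a possible shift of initial vertex. A dimension count then yields exactly $m$ such summands. Part~\ref{prop26.2} follows by the symmetric argument with the roles of $X$ and $Y$ swapped.

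The main obstacle is the rigidity step in the previous paragraph: ruling out non-string summands and pinning down the exact invariants. A concrete alternative, modelled on the proof of Proposition~\ref{prop21}, is to work directly with a standard basis of $X\otimes_A B(1,m,1)$ indexed by pairs of basis elements. The left action of each $\alpha_i$ and the right action of $\alpha_i$ for $i\neq 1$ reduce to the identity on each $\Bbbk^m$-fibre, while the right action of $\alpha_1$ introduces a Jordan block $J_m(1)$ along each fibre. Because $X$ is a string, hence acyclic, the cyclic returns that make a band indecomposable are broken at the endpoints of $X$, so the Jordan block disassembles into $m$ independent threads, each carrying a sub-bimodule whose string graph coincides with that of $X$.
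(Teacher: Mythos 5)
Your primary argument (induction on $m$ via the short exact sequence $0\to B(1,1,1)\to B(1,m,1)\to B(1,m-1,1)\to 0$) is fine up to the point you yourself flag: the one-sided computations are correct ($B(1,m,1)$ is indeed free of rank $m$ on either side, the sequence splits as left modules, and hence $X\otimes_A B(1,m,1)\cong X^{\oplus m}$ as a left $A$-module), but the ``rigidity step'' is a genuine gap, not a routine invocation of Krull--Schmidt. Knowing the restrictions of $X\otimes_A B(1,m,1)$ to left and to right $A$-modules only gives you two multisets of one-sided indecomposables (projectives and simples); it does not by itself force every bimodule summand to be a string of the same type, height and width as $X$. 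A priori nothing in that data prevents, say, a redistribution of columns among several string summands of different widths, or an exchange between types whose one-sided restrictions differ only in which simples occur (which matters once indices are read modulo $n$, e.g.\ for small $n$); ruling this out would require a separate combinatorial argument about which multisets of string restrictions can add up to $X^{\oplus m}$ on both sides simultaneously, and you have not supplied it. So as written the first route does not prove the proposition.

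Your ``concrete alternative'' in the last paragraph is the right repair, and it is essentially the paper's own proof. The paper first uses Lemmata~\ref{lem22} and \ref{lem23} to strip the twists (as you do) and then, instead of inducting on $m$ via the exact sequence, reduces to $Y=B(1,2,1)$ using Proposition~\ref{prop21} (every $B(1,m,1)$ is a summand of tensor powers of $B(1,2,1)$); it then computes in the standard basis $v_{i|j}\otimes a^{(s)}_{j|j}$ that all left actions and all right actions of $\alpha_i$ with $i\neq 1$ are identities, while the right action of $\alpha_1$ is a direct sum of Jordan blocks $J_2(1)$, and finally absorbs these invertible blocks by a change of basis performed successively along the support of $X$. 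Your version does this directly for $B(1,m,1)$ with $J_m(1)$, which works just as well; but the phrase ``the Jordan block disassembles into $m$ independent threads'' is doing real work and should be replaced by the explicit observation that the support graph of $X$ is a path (a tree), so one can propagate a base change vertex by vertex to turn every invertible arrow map into the identity, after which the representation is visibly $m$ copies of a string with the same graph, hence the same type, height and width as $X$. With that spelled out, your alternative route is complete; the inductive route via restrictions should either be dropped or supplemented by the missing rigidity argument.
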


\begin{proof}
We prove the first claim, the proof of the second claim is similar.
Let us start be noting that both $X^{\theta}$ and $X^{\eta_\lambda}$, for any $\lambda \in\Bbbk\setminus\{0\}$, 
are obviously string bimodules of the same height as $X$. Therefore, from Proposition~\ref{prop21}
it follows that it is enough to consider the case $Y=B(1,2,1)$. We use the same notation for
the standard basis in $B(1,2,1)$ as in the proof of Proposition~\ref{prop21}.

Let $\{v_{i|j}\}$ be the standard basis of $X$, where $v_{i|j}$ is a basis of $\Bbbk_{i|j}$
whenever the space in the vertex $i|j$ of \eqref{eq3} is non-zero. Then a basis in 
$X\otimes_A Y$ is given by all elements of the form
\begin{displaymath}
v_{i|j} \otimes a_{j|j}^{(1)}\qquad\text{ and }\qquad
v_{i|j} \otimes a_{j|j}^{(2)}.
\end{displaymath}
Similarly to the proof of Proposition~\ref{prop21}, all non-zero left actions of all $\alpha_i$
in this basis are given by the appropriate identity operators. Similarly for the right actions
of all $\alpha_i$, where $i\neq 1$. The action of $\alpha_1$ is give by a direct sum of 
copies of $J_2(1)$. The picture, written as a representation of \eqref{eq3}, looks as follows:
\begin{displaymath}
\xymatrix{
\Bbbk^2\ar[d]_{\mathrm{id}}&&&&&\\
\Bbbk^2&\Bbbk^2\ar[d]_{\mathrm{id}}\ar[l]_{J_2(1)}&&&&\\
&\Bbbk^2&\Bbbk^2\ar[d]_{\mathrm{id}}\ar[l]_{\mathrm{id}}&&&\\
&&\Bbbk^2&\Bbbk^2\ar[d]_{\mathrm{id}}\ar[l]_{\mathrm{id}}&\\
&&&\Bbbk^2&\Bbbk^2\ar[l]_{J_2(1)}\\
}  
\end{displaymath}
It is clear that we can choose new bases in all spaces such that in these new bases all actions
are given by the identity operators. The claim follows.
\end{proof}

\subsection{Tensoring string bimodules}\label{s3.3}
The aim of this subsection is to prove the following result.

\begin{proposition}\label{prop27}
Let $X$ and $Y$ be string $A$-$A$-bimodules. Then the bimodule
$X\otimes_A Y$ decomposes as a direct sum of string and $\Bbbk$-split $A$-$A$-bimodules, 
moreover, both the number of valleys and the width of each string summand does 
not exceed that of $Y$, and both the number of valleys and 
the height of each string summand does not exceed that of $X$.
\end{proposition}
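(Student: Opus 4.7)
The plan is to follow the template used in Propositions~\ref{prop21} and \ref{prop26}: work in the covering description from Subsection~\ref{s2.4}, lift both $X$ and $Y$ to their underlying zigzag walks in \eqref{eq3}, write them in standard bases supported on these walks, and then analyze $X\otimes_A Y$ in the natural product basis obtained by matching middle indices.

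Concretely, let $\{v_{i|j}\}$ and $\{w_{j|k}\}$ be the standard bases of $X$ and $Y$, supported respectively on the walks of $X$ and $Y$ (with indices read in $\mathbb{Z}_n$). A spanning set of $X\otimes_A Y$ is then given by the tensors $v_{i|j}\otimes w_{j|k}$ with matching middle index. As in the earlier proofs, the left action of each $\alpha_s$ comes from $X$ and the right action of $\alpha_s$ for $s\neq 1$ comes from $Y$; on the given spanning set each such action either annihilates a basis vector or carries it to another one, while the right action of $\alpha_1$ is transported across the tensor symbol using $w_{j|k-1}=\alpha_1\cdot w_{j|k}$ whenever this identification applies in $Y$'s walk. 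The resulting representation of \eqref{eq1} decomposes (after an elementary basis change on each connected piece) into a direct sum of indecomposable pieces obtained by gluing the walk of $X$ to the walk of $Y$ at every matched middle index: each such glued piece is either again an alternating right/down zigzag, giving a string bimodule, or else factors as a vertical piece tensored over $\Bbbk$ with a horizontal piece, giving a $\Bbbk$-split bimodule (with the projective-injectives and the bimodules $S_{i|j}^{(0)}$, $N_{i|j}^{(0)}$ from Subsection~\ref{s2.2} as special cases).

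Band summands do not arise, since the operator $R$ appearing in the proof of Lemma~\ref{lem23} detects band bimodules by having a non-trivial Jordan-cell action, whereas in the adapted basis every arrow of $X\otimes_A Y$ is either zero or an identity, so $R$ restricted to any piece is either zero or one. For the invariant bounds, a column $k$ of $X\otimes_A Y$ is non-zero only when $Y_{j|k}\neq 0$ for some $j$, so the non-zero columns of every string summand embed into the non-zero columns of $Y$, giving width at most the width of $Y$; the symmetric argument gives height at most the height of $X$. A valley at $i|k$ in a string summand requires both a non-zero downward and a non-zero rightward incoming arrow there; tracing back through the gluing shows that the downward step witnesses a valley step of $X$ and the rightward step witnesses a valley step of $Y$, so the number of valleys of each string summand is bounded both by $\mathbf{v}(X)$ and by $\mathbf{v}(Y)$.

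The main obstacle will be making the decomposition of the glued representation rigorous: the connected components of the glued walk must be classified carefully after quotienting by the covering action, both to exhibit the explicit string or $\Bbbk$-split structure of each piece and to rule out closed loops (which would force band summands). Once this combinatorial analysis is done, the bounds on width, height and valleys follow by a direct check on the adapted basis, exactly as in the proof of Proposition~\ref{prop26}.
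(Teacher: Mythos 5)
Your overall route is the same as the paper's: lift $X$ and $Y$ to the covering quiver \eqref{eq3}, span $X\otimes_A Y$ by tensors of standard basis vectors with matching middle index, and analyse the resulting glued action graph. But what you set aside as ``the main obstacle'' is precisely the substance of the proof, and you do not carry it out. The crucial step is the local analysis at the potential valleys of the product graph: for a vertex $x_{i|j}\otimes y_{j|t}$ lying in a rectangular component one must show that either neither $x_{i|j}$ nor $y_{j|t}$ is a valley of its string, in which case the whole component survives and gives a projective-injective ($\Bbbk$-split) summand, or at least one of them is a valley, in which case the vector dies in $X\otimes_A Y$: since the radical squares to zero, the left action annihilates the image of the right action on a string bimodule, so $x_{i|j}\otimes y_{j|t}=x_{i|j+1}\alpha_j\otimes y_{j|t}=x_{i|j+1}\otimes\alpha_j y_{j|t}=0$. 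It is this vanishing, together with the identifications $x\alpha\otimes y=x\otimes\alpha y$ that glue a horizontal fragment to a vertical fragment, that shows every non-$\Bbbk$-split summand has a zigzag (tree) action graph, and that each gluing point requires a valley of $Y$ (for the width count) respectively of $X$ (for the height count), which is what actually yields the bounds on width, height and number of valleys. Without this case analysis, your classification of the glued pieces and the invariant bounds are statements of the expected answer rather than arguments.

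Moreover, the one argument you do supply, excluding band summands via the loop operator $R$, is not valid: knowing that all structure maps are $0$ or identities, so that $R$ acts by ``zero or one'' on each piece, does not exclude bands, because a band bimodule with $\lambda=1$ --- for instance the regular bimodule $B(1,1,1)$ itself --- has exactly this form, with $R$ acting as the identity. Strings are distinguished from bands by the acyclicity of the underlying walk, not by the eigenvalue of $R$ lying in $\{0,1\}$, so ruling out bands again reduces to showing that the glued components are trees, i.e.\ to the combinatorial analysis you postponed. (A minor point: the special role you assign to the right action of $\alpha_1$ is an artefact of the band computations in Propositions~\ref{prop21} and \ref{prop26}; in the string-times-string situation all middle relations $x\alpha_s\otimes y=x\otimes\alpha_s y$ must be treated on an equal footing.)
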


\begin{proof}
We view both $X$ and $Y$ as representations of \eqref{eq3}. Let $x_{i|j}$
be the standard basis of $X$ and $y_{i|j}$ be the standard basis of $Y$.
Then $x_{i|j}\otimes y_{s|t}$ is a basis of $X\otimes_{\Bbbk}Y$.

Consider a directed graph $\Gamma$ whose vertices are all $x_{i|j}\otimes y_{s|t}$ and
arrows are defined as follows:
\begin{itemize} 
\item there is a (vertical) arrow from $x_{i|j}\otimes y_{s|t}$ to $x_{i+1|j}\otimes y_{s|t}$
if $x_{i+1|j}=\alpha_i x_{i|j}$ in $X$.
\item there is a (horizontal) arrow from $x_{i|j}\otimes y_{s|t}$ to $x_{i|j}\otimes y_{s|t-1}$
if $y_{s|t-1}=y_{s|t}\alpha_{t-1}$ in $Y$.
\end{itemize}

The graph $\Gamma$ represents the action of $A$ (on the left via vertical arrows
and on the right via horizontal arrows) on the $\Bbbk$-split 
$A$-$A$-bimodule $X\otimes_{\Bbbk}Y$. Therefore each connected component of 
$\Gamma$ looks as follows:
\begin{displaymath}
\xymatrix{\bullet},\qquad\qquad
\xymatrix{\bullet&\bullet\ar[l]},\qquad\qquad
\xymatrix{\bullet\ar[d]\\\bullet},\qquad\qquad
\xymatrix{\bullet\ar[d]&\bullet\ar[d]\ar[l]\\\bullet&\bullet\ar[l]}.
\end{displaymath}
We would like to understand what happens with the graph $\Gamma$ under the projection
$X\otimes_{\Bbbk}Y\tto X\otimes_{A}Y$, that is under factoring out the relations
$xa\otimes y=x\otimes ay$, for $x\in X$, $y\in Y$ and $a\in A$.

First we note that, if $j\not\equiv s\text{ (mod $n$)}$, then $x_{i|j}\otimes y_{s|t}=0$
in $X\otimes_{A}Y$. As $j$ and $s$ are the same for all vertices in a connected
component of $\Gamma$, we can just throw away all connected components for which 
$j\not\equiv s\text{ (mod $n$)}$.

Next consider the valleys in $\Gamma$. These exist only for rectangular connected
components (the last one in the list above). Let $x_{i|j}\otimes y_{s|t}$ be such a valley.
Then one of the following two possible cases occur.

{\bf Case~1.} Neither $x_{i|j}$ nor $y_{s|t}$ is a valley of the graphs of $X$ and $Y$,
respectively. This happens exactly when $X$ is of type $M$ or $S$ and $x_{i|j}$ is the 
``last'', that is south-east vertex in $X$, and $Y$ is of type $M$ or $N$ and $x_{i|j}$ is 
the  ``first'', that is north-west vertex in $Y$. In this case, it is easy to see that 
the whole connected component survives the projection onto $X\otimes_{A}Y$ and gives
there a copy of a ($\Bbbk$-split) projective-injective bimodule.

{\bf Case~2.} At least one of $x_{i|j}$ or $y_{s|t}$ is a valley in the respective
graphs of $X$ and $Y$. Let us consider the case when $x_{i|j}$ is a valley (the other
case is similar). Then $x_{i|j}=x_{i|j+1}\alpha_{j}$. At the same time,
$\alpha_{j}y_{s|t}=0$ as $y_{s|t}$ is in the image of the right action of $A$
(and the left action annihilates the image of the right action on string bimodules).
Therefore such a valley of $\Gamma$ is mapped to zero in $X\otimes_{A}Y$.

A similar argument show that, apart from some vertices of $\Gamma$ being sent to zero
via the projection $X\otimes_{\Bbbk}Y\tto X\otimes_{A}Y$, one can also have
identifications in which the left vertex of one of the components of the form
\begin{displaymath}
\xymatrix{\bullet},\qquad\qquad
\xymatrix{\bullet&\bullet\ar[l]},\qquad\qquad
\xymatrix{\bullet&\bullet\ar[d]\ar[l]\\&\bullet}
\end{displaymath}
(here the last component is obtained from a rectangular component of $\Gamma$ the valley
of which gets killed) is identified with the bottom vertex of 
one of the components of the form
\begin{displaymath}
\xymatrix{\bullet},\qquad\qquad
\xymatrix{\bullet\ar[d]\\\bullet},\qquad\qquad
\xymatrix{\bullet&\bullet\ar[d]\ar[l]\\&\bullet}.
\end{displaymath}
This implies that the action graph of $A$ on any non $\Bbbk$-split 
direct summand of  $X\otimes_{A}Y$ is a tree and hence this
direct summand must be a string bimodule.

It remains to prove the claims about the width and the height. We prove the first
one, the second one is similar. Note that the width of any connected component of 
$\Gamma$ is at most one. To get a summand of greater width, we need the identifications
as described in the previous paragraph to happen between components of the form
\begin{equation}\label{eq8}
\xymatrix{\bullet&\bullet\ar[l]},\qquad\qquad
\xymatrix{\bullet&\bullet\ar[d]\ar[l]\\&\bullet}
\end{equation}
where the first one can appear at most ones, on the far right as it appearance 
stops farther possible identifications on the right. As follows from the above, potential
identifications correspond to valleys of $Y$. Indeed, we need a fragment in
$Y$ of the form  $\xymatrix{\bullet&\ar[l]}$ to have one of the components in
\eqref{eq8} and we need a fragment in $Y$ of the form
\begin{displaymath}
\xymatrix{\ar[d]\\\bullet}
\end{displaymath}
for the same vertex in oder to be able to do identification. Putting all this together,
we see that both the width and the number of valleys cannot increase. The claim follows.
\end{proof}

\subsection{Some explicit computations}\label{s3.4}

\begin{lemma}\label{lem31}
For any $k>0$, the bimodule $M_{1|2}^{(k-1)}$ is a direct 
summand of the bimodule $M_{1|1}^{(k)}\otimes_A  M_{1|1}^{(k)}$.
\end{lemma}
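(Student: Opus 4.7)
The plan is to set $X = Y = M_{1|1}^{(k)}$ and compute the bimodule $X \otimes_A Y$ explicitly, then recognise $M_{1|2}^{(k-1)}$ inside it. By Proposition~\ref{prop27}, $X \otimes_A Y$ decomposes as a direct sum of string and $\Bbbk$-split bimodules, so to establish the lemma it suffices to exhibit an indecomposable sub-bimodule of the required type, initial vertex, and number of valleys.

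First, I would fix standard bases $\{x_{i|j}\}$ of $X$ and $\{y_{i|j}\}$ of $Y$ indexed by the vertices of the walk $1|1,\,1|2,\,2|2,\,2|3,\,\dots,\,(k+2)|(k+2)$ underlying $V(1|1,r,2k+2)$, and then catalogue all tensors $x_u \otimes y_v$ subject to $\mathrm{col}(u) \equiv \mathrm{row}(v) \pmod{n}$, organised by the bimodule position $\mathrm{row}(u)|\mathrm{col}(v)$. Three types of positions appear: $p|p$ carries one candidate, $p|(p+1)$ for $p = 1, \dots, k+1$ carries two candidates, and $p|(p+2)$ for $p = 1, \dots, k$ carries one candidate.

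The technical core is to show that each diagonal position $p|p$ contributes zero in $X \otimes_A Y$. For $p \leq k+1$, writing $x_{p|p} = x_{p|(p+1)} \alpha_p$ and moving $\alpha_p$ across the tensor sign gives $x_{p|p} \otimes y_{p|p} = x_{p|(p+1)} \otimes \alpha_p y_{p|p} = 0$, since the target vertex $(p+1)|p$ lies outside the walk of $Y$. The endpoint case $p = k+2$ is handled dually by rewriting $y_{(k+2)|(k+2)} = \alpha_{k+1} y_{(k+1)|(k+2)}$ and invoking $x_{(k+2)|(k+2)} \alpha_{k+1} = 0$. Simultaneously, the identity $x_{p|p} \otimes y_{p|(p+1)} = x_{p|(p+1)} \alpha_p \otimes y_{p|(p+1)} = x_{p|(p+1)} \otimes y_{(p+1)|(p+1)}$ collapses the two candidates at $p|(p+1)$ into a single non-zero class $z_{p|(p+1)}$, while $z_{p|(p+2)} := x_{p|(p+1)} \otimes y_{(p+1)|(p+2)}$ is directly seen to be non-zero.

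A straightforward computation of the left and right actions of the $\alpha_i$ on the $z$'s then produces exactly a walk of length $2k$ starting at $1|2$ with initial step $r$, visiting $1|2,\,1|3,\,2|3,\,2|4,\,\dots,\,k|(k+1),\,k|(k+2),\,(k+1)|(k+2)$ and having $k-1$ valleys, all of type $M$. This is precisely the walk defining $M_{1|2}^{(k-1)}$, so the $\Bbbk$-span of the $z$'s is a sub-bimodule isomorphic to $M_{1|2}^{(k-1)}$; combined with the decomposition guaranteed by Proposition~\ref{prop27}, this yields $M_{1|2}^{(k-1)}$ as a direct summand of $X \otimes_A Y$.

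The main obstacle will be the careful bookkeeping of vanishings and identifications at the two endpoints of the walk, where the generic ``peak above a valley target'' pattern degenerates and one has to switch between using right-action and left-action preimages. A secondary subtlety arises for small $n$, where the discrete torus in \eqref{eq1} forces extra index identifications that may produce further candidate tensors at positions beyond the ones above; however, the local manipulations used to kill the diagonals and collapse the super-diagonals remain valid, so the $M_{1|2}^{(k-1)}$-summand persists.
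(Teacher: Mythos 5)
Your proof is correct and takes essentially the same route as the paper: both exhibit the summand as the span of the classes $x_{p|p}\otimes x_{p|p+1}=x_{p|p+1}\otimes x_{p+1|p+1}$ and $x_{p|p+1}\otimes x_{p+1|p+2}$, with the diagonal tensors killed and the superdiagonal ones identified via the relations over $A$, relying on the component analysis of Proposition~\ref{prop27} for the direct-sum structure. Your write-up merely makes explicit the endpoint and identification bookkeeping that the paper delegates to ``following the arguments in the proof of Proposition~\ref{prop27}''.
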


\begin{proof}
Let $\{x_{i|j}\}$ be the standard basis of  $M_{1|1}^{(k)}$ considered as a representation
of \eqref{eq3}. Following the arguments in the proof of Proposition~\ref{prop27},
we see that $M_{1|1}^{(k)}\otimes_A  M_{1|1}^{(k)}$ contains a direct summand which
is the span of the following elements:
\begin{multline*}
x_{1|1}\otimes x_{1|2}=x_{1|2}\otimes x_{2|2},\quad
x_{1|2}\otimes x_{2|3},\quad
x_{2|2}\otimes x_{2|3}=x_{2|3}\otimes x_{3|3},\dots,\\
x_{k+1|k+1}\otimes x_{k+1|k+2}=x_{k+1|k+2}\otimes x_{k+2|k+2}.
\end{multline*}
This direct summand is isomorphic to $M_{1|2}^{(k-1)}$, the claim follows.
\end{proof}

\begin{lemma}\label{lem32}
For any $k>0$, the bimodule $N_{1|1}^{(k)}$ is a direct 
summand of the bimodule $W_{1|1}^{(k)}\otimes_A  M_{1|1}^{(k)}$.
\end{lemma}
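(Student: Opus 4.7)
The plan is to adapt the method of Lemma~\ref{lem31} and the connected-component analysis in the proof of Proposition~\ref{prop27}. I will exhibit $2k+2$ explicit elements of $W_{1|1}^{(k)}\otimes_A M_{1|1}^{(k)}$ whose span is an $A$-$A$-subbimodule isomorphic to $N_{1|1}^{(k)}$ and a direct summand. Let $\{x_{i|j}\}$ denote the standard basis of $W_{1|1}^{(k)}$, supported on the walk $1|1,2|1,2|2,3|2,\ldots,(k+1)|k,(k+1)|(k+1)$, and let $\{y_{s|t}\}$ denote the standard basis of $M_{1|1}^{(k)}$, supported on $1|1,1|2,2|2,2|3,\ldots,(k+1)|(k+1),(k+1)|(k+2),(k+2)|(k+2)$. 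For $j=1,\ldots,k+1$, I will set
\[
a_j:=x_{j|j}\otimes y_{j|j+1}\qquad\text{and}\qquad b_j:=x_{j|j}\otimes y_{j|j},
\]
so $a_j$ sits at vertex $j|(j+1)$ of \eqref{eq1} and $b_j$ at vertex $j|j$. Combining $x_{j|j}\alpha_{j-1}=x_{j|j-1}$ in $W$ with $\alpha_{j-1}y_{j-1|j}=y_{j|j}$ in $M$, the tensor relation $xa\otimes y=x\otimes ay$ yields the useful alternative presentation $b_j=x_{j|j-1}\otimes y_{j-1|j}$ for $j\geq 2$.

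The key actions come out as follows. The right action gives $a_j\cdot\alpha_j=b_j$ for every $j$ directly from $y_{j|j+1}\alpha_j=y_{j|j}$. For $j\leq k$, the left action gives
\[
\alpha_j\cdot a_j=x_{j+1|j}\otimes y_{j|j+1}=x_{j+1|j+1}\alpha_j\otimes y_{j|j+1}=x_{j+1|j+1}\otimes\alpha_j y_{j|j+1}=b_{j+1},
\]
while $\alpha_{k+1}\cdot a_{k+1}=0$ because the walk of $W_{1|1}^{(k)}$ terminates at $(k+1)|(k+1)$. Each $b_j$ is then a sink: on the right because $y_{j|j}$ is either the right-endpoint of the $M$-walk (for $j=1$) or a valley of $M$ (for $j\geq 2$), so it has no outgoing right action; on the left because $\alpha_j\cdot b_j=x_{j+1|j}\otimes y_{j|j}$ is precisely the ``bottom-right corner of block~$j$'' in the graph $\Gamma$ of Proposition~\ref{prop27}, which is annihilated by the valley-in-$X$ argument since $x_{j+1|j}$ is a valley of $W$ (for $j\leq k$), or vanishes trivially for $j=k+1$ as $x_{k+2|k+1}$ is not on the walk.

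These actions match, vertex by vertex and arrow by arrow, the action graph of $N_{1|1}^{(k)}$: the $a_j$'s correspond to the non-valley walk vertices $j|(j+1)$, the $b_j$'s to the vertices $j|j$, the right arrows $a_j\to b_j$ to the horizontal arrows in the walk, and the left arrows $a_j\to b_{j+1}$ to the vertical ones. Since no further $\Gamma$-arrow or tensor identification reaches out of $\{a_j,b_j\}_{j=1}^{k+1}$, this set is a connected component of the reduced action graph of $W_{1|1}^{(k)}\otimes_A M_{1|1}^{(k)}$; as in Proposition~\ref{prop27}, such a component yields an indecomposable direct summand, which must therefore be isomorphic to $N_{1|1}^{(k)}$. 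The main obstacle is the closure check—verifying that every $\Gamma$-arrow and every tensor identification out of an $a_j$ or $b_j$ either lands back in the set or is killed by a valley—which amounts to a careful but routine boundary analysis of both walks.
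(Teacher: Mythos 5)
Your proposal is correct and follows essentially the same route as the paper: the elements $a_j=x_{j|j}\otimes y_{j|j+1}$ and $b_j=x_{j|j}\otimes y_{j|j}$ (with the identification $b_j=x_{j|j-1}\otimes y_{j-1|j}$) are exactly the spanning set the paper exhibits, and the direct-summand claim is justified in both cases by the component analysis of Proposition~\ref{prop27}. You simply spell out the action and closure checks in more detail than the paper does.
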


\begin{proof}
Let $\{x_{i|j}\}$ be the standard basis of  $W_{1|1}^{(k)}$ 
and $\{y_{i|j}\}$ be the standard basis of  $M_{1|1}^{(k)}$, both 
considered as a representation of \eqref{eq3}. 
Following the arguments in the proof of Proposition~\ref{prop27},
we see that $W_{1|1}^{(k)}\otimes_A  M_{1|1}^{(k)}$ contains a direct summand which
is the span of the following elements:
\begin{multline*}
x_{1|1}\otimes y_{1|1},\quad 
x_{1|1}\otimes y_{1|2},\quad
x_{2|1}\otimes y_{1|2}=x_{2|2}\otimes y_{2|2},\\
x_{2|2}\otimes y_{2|3},\quad\dots,\quad
x_{k+1|k+1}\otimes y_{k+1|k+2}.
\end{multline*}
This direct summand is isomorphic to $N_{1|1}^{(k)}$, the claim follows.
\end{proof}

\begin{lemma}\label{lem33}
For any $k>0$, the bimodule $M_{1|1}^{(k)}$ is a direct 
summand of the bimodule $S_{1|1}^{(k)}\otimes_A  N_{1|1}^{(k)}$.
\end{lemma}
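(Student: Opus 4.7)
The plan is to follow the template of Lemmas~\ref{lem31} and~\ref{lem32}: exhibit an explicit direct summand of $S^{(k)}_{1|1}\otimes_A N^{(k)}_{1|1}$ and identify it with $M^{(k)}_{1|1}$. Fix the standard bases $\{x_{i|j}\}$ and $\{y_{i|j}\}$ of $S^{(k)}_{1|1}$ and $N^{(k)}_{1|1}$ regarded as representations of~\eqref{eq3}. Here $S^{(k)}_{1|1}$ is supported on the zig-zag path $(1|1),(2|1),(2|2),(3|2),\dots,(k+2|k+1)$ (first step downward) and $N^{(k)}_{1|1}$ on $(1|1),(1|2),(2|2),(2|3),\dots,(k+1|k+2)$ (first step rightward).

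Following the analysis in the proof of Proposition~\ref{prop27}, I would track the connected component of the reduced action graph on $X\otimes_A Y$ starting from $x_{1|1}\otimes y_{1|1}$. The relevant rectangular components of $\Gamma$ are $\{x_{i|i}\otimes y_{i|i+1},\ x_{i+1|i}\otimes y_{i|i+1},\ x_{i|i}\otimes y_{i|i},\ x_{i+1|i}\otimes y_{i|i}\}$ for $i=1,\dots,k+1$; in each, the valley $x_{i+1|i}\otimes y_{i|i}$ is killed as in Case~2 of the proof of Proposition~\ref{prop27}. For $i=1,\dots,k$, the resulting L-shape splices to the next rectangle through the identification $x_{i+1|i}\otimes y_{i|i+1}=(x_{i+1|i+1}\cdot\alpha_i)\otimes y_{i|i+1}=x_{i+1|i+1}\otimes y_{i+1|i+1}$. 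The $2k+3$ elements that survive,
\[
x_{i|i}\otimes y_{i|i},\ x_{i|i}\otimes y_{i|i+1}\quad(i=1,\dots,k+1),\qquad x_{k+2|k+1}\otimes y_{k+1|k+2},
\]
are arranged along the $M$-shaped zig-zag $(1|1)\to(1|2)\to(2|2)\to\dots\to(k+1|k+2)\to(k+2|k+2)$ with the identity operator on each non-zero arrow; the interior actions are routine consequences of $xa\otimes y=x\otimes ay$.

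The main step is the analysis of the terminal element $x_{k+2|k+1}\otimes y_{k+1|k+2}$ at the corner $(k+2|k+2)$, which comes from the L-shape of the $(k+1)$-th rectangle and has no further rectangle to splice to. To see it is non-zero, one observes that neither $x_{k+2|k+1}$ lies in the image of the right $A$-action on $S^{(k)}_{1|1}$ nor $y_{k+1|k+2}$ in the image of the left $A$-action on $N^{(k)}_{1|1}$, so no tensor relation applies. Both outgoing arrows from it must be shown to vanish: the downward $\alpha_{k+2}$-action sends $x_{k+2|k+1}$ to $x_{k+3|k+1}$, which is outside the support of $S^{(k)}_{1|1}$; the leftward $\alpha_{k+1}$-action gives $x_{k+2|k+1}\otimes y_{k+1|k+1}=(x_{k+2|k+1}\cdot\alpha_k)\otimes y_{k|k+1}=0$, using $y_{k+1|k+1}=\alpha_k\cdot y_{k|k+1}$ and the fact that $(k+2|k)$ is outside the support of $S^{(k)}_{1|1}$. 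With this corner analysis in place, Proposition~\ref{prop27} identifies the span of the listed elements as a direct summand (a full connected component of the reduced action graph), and this summand is isomorphic to $M^{(k)}_{1|1}$ by construction.
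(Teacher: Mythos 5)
Your proposal is correct and follows essentially the same route as the paper: viewing both factors as representations of \eqref{eq3} and, via the graph analysis from the proof of Proposition~\ref{prop27}, exhibiting exactly the same $2k+3$ surviving basis elements spanning a direct summand isomorphic to $M^{(k)}_{1|1}$. Your extra corner analysis at $(k+2|k+2)$ is a finer level of detail than the paper records, but it is the same argument.
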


\begin{proof}
Let $\{x_{i|j}\}$ be the standard basis of  $S_{1|1}^{(k)}$ 
and $\{y_{i|j}\}$ be the standard basis of  $N_{1|1}^{(k)}$, both 
considered as a representation of \eqref{eq3}. 
Following the arguments in the proof of Proposition~\ref{prop27},
we see that $S_{1|1}^{(k)}\otimes_A  N_{1|1}^{(k)}$ contains a direct summand which
is the span of the following elements:
\begin{multline*}
x_{1|1}\otimes y_{1|1},\quad 
x_{1|1}\otimes y_{1|2},\quad
x_{2|1}\otimes y_{1|2}=x_{2|2}\otimes y_{2|2},\\
x_{2|2}\otimes y_{2|3},\quad\dots,\quad
x_{k+1|k+1}\otimes y_{k+1|k+2},\quad 
x_{k+2|k+1}\otimes y_{k+1|k+2}.
\end{multline*}
This direct summand is isomorphic to $M_{1|1}^{(k)}$, the claim follows.
\end{proof}

\begin{lemma}\label{lem34}
For any $k>0$, the bimodule $W_{1|1}^{(k)}$ is a direct 
summand of the bimodule $N_{1|1}^{(k)}\otimes_A  S_{2|1}^{(k)}$.
\end{lemma}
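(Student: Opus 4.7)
The plan is to mirror Lemmas~\ref{lem31}--\ref{lem33}: introduce the standard bases $\{x_{i|j}\}$ of $N_{1|1}^{(k)}$ and $\{y_{i|j}\}$ of $S_{2|1}^{(k)}$ coming from \eqref{eq3}, and write down an explicit list of $2k+1$ tensor elements whose $\Bbbk$-span is a sub-bimodule of $N_{1|1}^{(k)}\otimes_A S_{2|1}^{(k)}$ isomorphic to $W_{1|1}^{(k)}$. Recall that the walk of $N_{1|1}^{(k)}$ visits the vertices $1|1,\,1|2,\,2|2,\,2|3,\ldots,(k+1)|(k+1),\,(k+1)|(k+2)$, while the walk of $S_{2|1}^{(k)}$ visits $2|1,\,3|1,\,3|2,\,4|2,\ldots,(k+3)|(k+1)$; matching the second index of $N$ with the first index of $S$ leaves room for basis elements along the diagonal and the immediate sub-diagonal in the bimodule quiver.

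For the vertex of $W_{1|1}^{(k)}$ at position $j|j$ I would use $w_{j|j}:=x_{j|j+1}\otimes y_{j+1|j}$ (for $j=1,\ldots,k+1$), and for the vertex at position $(j+1)|j$ I would use $w_{j+1|j}:=x_{j+1|j+1}\otimes y_{j+1|j}$ (for $j=1,\ldots,k$). The crucial observation is the identification
\begin{displaymath}
x_{j+1|j+1}\otimes y_{j+1|j}\;=\;x_{j+1|j+2}\otimes y_{j+2|j},
\end{displaymath}
arising from the bimodule relation $x_{j+1|j+2}\alpha_{j+1}\otimes y_{j+1|j}=x_{j+1|j+2}\otimes\alpha_{j+1}y_{j+1|j}$, which pairs the $r$-step $x_{j+1|j+2}\alpha_{j+1}=x_{j+1|j+1}$ in $N$ with the $d$-step $\alpha_{j+1}y_{j+1|j}=y_{j+2|j}$ in $S$. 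Packaged in the style of the preceding lemmas, the proposed summand is the $\Bbbk$-span of
\begin{multline*}
x_{1|2}\otimes y_{2|1},\quad x_{2|2}\otimes y_{2|1}=x_{2|3}\otimes y_{3|1},\quad x_{2|3}\otimes y_{3|2},\\
x_{3|3}\otimes y_{3|2}=x_{3|4}\otimes y_{4|2},\quad\ldots,\quad x_{k+1|k+2}\otimes y_{k+2|k+1}.
\end{multline*}

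Verification splits into two parts. First, the $W$-shape arrows: the left action of $\alpha_j$ on $w_{j|j}$ yields $w_{j+1|j}$ via the $d$-step $\alpha_j x_{j|j+1}=x_{j+1|j+1}$ in $N$, while the right action of $\alpha_j$ on $w_{j+1|j+1}$ yields $w_{j+1|j}$ via the $r$-step $y_{j+2|j+1}\alpha_j=y_{j+2|j}$ in $S$ together with the second description of $w_{j+1|j}$. Second, all remaining actions must be shown to vanish: the relevant steps are absent from the walks of $N$ or $S$, or the resulting tensor element lies in a rectangular component of the graph $\Gamma$ whose valley gets killed under the projection $\otimes_\Bbbk\tto\otimes_A$ as in the proof of Proposition~\ref{prop27}.

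The main bookkeeping obstacle is juggling the two expressions for each sub-diagonal element $w_{j+1|j}$: one expression reveals the vertical arrow coming in from above, the other the horizontal arrow coming in from the right, and one must switch between them at the right moment. Once this is properly tracked, the action graph on the listed elements matches the walk of $W_{1|1}^{(k)}$ exactly, so the span is a sub-bimodule isomorphic to $W_{1|1}^{(k)}$ and, by the component analysis from the proof of Proposition~\ref{prop27}, splits off as a direct summand of $N_{1|1}^{(k)}\otimes_A S_{2|1}^{(k)}$.
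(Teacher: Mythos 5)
Your proposal is correct and follows essentially the same route as the paper: it exhibits the same list of $2k+1$ standard tensor elements $x_{1|2}\otimes y_{2|1},\;x_{2|2}\otimes y_{2|1}=x_{2|3}\otimes y_{3|1},\;\dots,\;x_{k+1|k+2}\otimes y_{k+2|k+1}$, uses the bimodule relation to identify the two descriptions of each subdiagonal element, and appeals to the component analysis in the proof of Proposition~\ref{prop27} to conclude that this span is a direct summand isomorphic to $W_{1|1}^{(k)}$. The only difference is that you spell out the arrow-by-arrow verification that the paper leaves implicit.
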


\begin{proof}
Let $\{x_{i|j}\}$ be the standard basis of  $N_{1|1}^{(k)}$ 
and $\{y_{i|j}\}$ be the standard basis of  $S_{1|1}^{(k)}$, both 
considered as a representation of \eqref{eq3}. 
Following the arguments in the proof of Proposition~\ref{prop27},
we see that $N_{1|1}^{(k)}\otimes_A  S_{2|1}^{(k)}$ contains a direct summand which
is the span of the following elements:
\begin{displaymath}
x_{1|2}\otimes y_{2|1},\quad 
x_{2|2}\otimes y_{2|1}=x_{2|3}\otimes y_{3|1},\quad 
x_{2|3}\otimes y_{3|2},\quad\dots,
x_{k+1|k+2}\otimes y_{k+2|k+1}.
\end{displaymath}
This direct summand is isomorphic to $W_{1|1}^{(k)}$, the claim follows.
\end{proof}

\begin{lemma}\label{lem35}
For any $k>0$, the bimodule $S_{1|2}^{(k)}$ is a direct 
summand of the bimodule $M_{1|1}^{(k)}\otimes_A  W_{2|2}^{(k)}$.
\end{lemma}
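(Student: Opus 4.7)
My plan is to follow the same template used in the proofs of Lemmas~\ref{lem31}--\ref{lem34}. Let $\{x_{i|j}\}$ denote the standard basis of $M_{1|1}^{(k)}$ and $\{y_{i|j}\}$ the standard basis of $W_{2|2}^{(k)}$, both regarded as representations of \eqref{eq3} via $\Theta$. By the analysis of the graph $\Gamma$ in the proof of Proposition~\ref{prop27}, every non-$\Bbbk$-split connected component of the action graph induced on $M_{1|1}^{(k)}\otimes_A W_{2|2}^{(k)}$ is a direct summand of string type; hence it suffices to exhibit a sub-bimodule whose action graph is the tree of $S_{1|2}^{(k)}$.

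Recall that $S_{1|2}^{(k)}$ has $2k+2$ vertices $1|2,\,2|2,\,2|3,\,3|3,\,\ldots,\,(k+1)|(k+1),\,(k+1)|(k+2),\,(k+2)|(k+2)$, with $k$ valleys at the diagonal positions $i|i$ for $i=2,\ldots,k+1$, and with an initial ``down'' step from $1|2$ to $2|2$. I propose the following $2k+2$ elements as a basis of the claimed summand, one per vertex of $S_{1|2}^{(k)}$:
\begin{displaymath}
x_{1|2}\otimes y_{2|2},\quad\{x_{i|i}\otimes y_{i|i}\}_{i=2}^{k+1},\quad\{x_{i|i+1}\otimes y_{i+1|i+1}\}_{i=2}^{k+1},\quad x_{k+2|k+2}\otimes y_{k+2|k+2}.
\end{displaymath}
The essential valley identifications $x_{i|i}\otimes y_{i|i}=x_{i|i+1}\otimes y_{i+1|i}$, for $i=2,\ldots,k+1$, follow from the tensor relation using $x_{i|i}=x_{i|i+1}\alpha_i$ in $M_{1|1}^{(k)}$ (horizontal right-arrow $(i,i+1)\to(i,i)$) and $\alpha_iy_{i|i}=y_{i+1|i}$ in $W_{2|2}^{(k)}$ (vertical left-arrow $(i,i)\to(i+1,i)$), exactly as in Lemmas~\ref{lem31}--\ref{lem34}.

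What remains is a direct computation: the left $\alpha_i$-action carries the position-$(i,i+1)$ element to the position-$(i+1,i+1)$ element (giving all down-steps, including $1|2\to 2|2$ and the terminal one into $(k+2)|(k+2)$), and the right $\alpha_i$-action carries the position-$(i,i+1)$ element to the valley at $(i,i)$ (giving all right-steps). Every other $\alpha_j$-action produces either an element whose support lies outside the list of vertices of $S_{1|2}^{(k)}$, or an element that vanishes after writing the relevant factor as $\alpha\cdot(\text{basis vector})$ and transferring $\alpha$ across the tensor to a position where it annihilates the other factor, by the same trick as in the preceding lemmas.

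I do not anticipate any substantive obstacle; the argument is a direct analog of Lemmas~\ref{lem31}--\ref{lem34}. The only point requiring any care is the choice of the initial pair $x_{1|2}\otimes y_{2|2}$: this pairing makes the unique non-vanishing action at the initial vertex the left $\alpha_1$-action, so that the resulting string opens with a downward step and therefore has initial vertex $1|2$ and type $S$, as required.
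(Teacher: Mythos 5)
Your proof is correct and takes essentially the same route as the paper's: you exhibit exactly the same $2k+2$ tensor elements (the paper lists them as $x_{1|2}\otimes y_{2|2},\ x_{2|2}\otimes y_{2|2}=x_{2|3}\otimes y_{3|2},\ x_{2|3}\otimes y_{3|3},\ \dots,\ x_{k+2|k+2}\otimes y_{k+2|k+2}$) and appeal to the analysis in the proof of Proposition~\ref{prop27} to conclude that their span is a direct summand isomorphic to $S_{1|2}^{(k)}$. Your version simply spells out the valley identifications and the action/vanishing checks that the paper leaves implicit.
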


\begin{proof}
Let $\{x_{i|j}\}$ be the standard basis of  $M_{1|1}^{(k)}$ 
and $\{y_{i|j}\}$ be the standard basis of  $W_{2|2}^{(k)}$, both 
considered as a representation of \eqref{eq3}. 
Following the arguments in the proof of Proposition~\ref{prop27},
we see that $M_{1|1}^{(k)}\otimes_A  W_{2|2}^{(k)}$ contains a direct summand which
is the span of the following elements:
\begin{displaymath}
x_{1|2}\otimes y_{2|2},\quad 
x_{2|2}\otimes y_{2|2}=x_{2|3}\otimes y_{3|2},\quad 
x_{2|3}\otimes y_{3|3},\quad\dots,
x_{k+2|k+2}\otimes y_{k+2|k+2}.
\end{displaymath}
This direct summand is isomorphic to $S_{1|2}^{(k)}$, the claim follows.
\end{proof}

\section{Proof of Theorem~\ref{thm1}}\label{s4}

\subsection{Proof of Theorem~\ref{thm1}\eqref{thm1.1}}\label{s4.1}
This follows immediately from Propositions~\ref{prop21}, \ref{prop26} and \ref{prop27}.

\subsection{Proof of Theorem~\ref{thm1}\eqref{thm1.2}}\label{s4.2}
After Propositions~\ref{prop27}, we only need to argue that all 
string bimodules with the same number of valleys belong to 
the same two-sided cell. If we fix a type, then, looking at
the action graph of the bimodule, it is clear that all bimodules
with fixed number of valleys of this particular type can
be transformed into each other by twisting by some powers of
$\theta$ on both sides. Twisting by $\theta$ is the same as 
tensoring with $A^{\theta}$ or ${}^{\theta}\hspace{-1mm}A$.
Therefore the fact that all bimodules
of the same type having the same number of valleys belong to the same
two-sided cell follows from Propositions~\ref{prop27}.

Lemmata~\ref{lem32} and \ref{lem33} imply that the bimodules of type $M$ and $N$
with the same number of valleys belong to the same two-sided cell.
Lemmata~\ref{lem32} and \ref{lem34} imply that the bimodules of type $W$ and $N$
with the same number of valleys belong to the same two-sided cell.
Finally, Lemmata~\ref{lem33} and \ref{lem35} imply that the bimodules of type $M$ and $S$
with the same number of valleys belong to the same two-sided cell.

\subsection{Proof of Theorem~\ref{thm1}\eqref{thm1.3}}\label{s4.3}
This is proved in Subsection~\ref{s2.2}.

\subsection{Proof of Theorem~\ref{thm1}\eqref{thm1.4}}\label{s4.4}
All non-$\Bbbk$-split string bimodules with zero valleys are of type $M$.
Therefore the fact that they all form a two-sided cell follows from 
Propositions~\ref{prop21}, \ref{prop26} and \ref{prop27} and the
observation in the first paragraph of Subsection~\ref{s4.2}.

\subsection{Proof of Theorem~\ref{thm1}\eqref{thm1.5}}\label{s4.5}
It is clear that $\mathcal{J}_{\mathrm{band}}$ is the minimum and
$\mathcal{J}_{\mathrm{split}}$ is the maximum two-sided cells. Hence,
we just need to prove that $\mathcal{J}_{k-1}\geq_J\mathcal{J}_k$.
However, this follows from Lemma~\ref{lem31}.

\section{Proof of Theorem~\ref{thm2}}\label{s5}

\subsection{Proof of Theorem~\ref{thm2}\eqref{thm2.1}}\label{s5.1}
This follows from Proposition~\ref{prop21}.

\subsection{Proof of Theorem~\ref{thm2}\eqref{thm2.2}}\label{s5.2}
This is proved in Subsection~\ref{s2.2}.

\subsection{Proof of Theorem~\ref{thm2}\eqref{thm2.3}}\label{s5.3}
This is proved in Subsection~\ref{s2.2}.

\subsection{Proof of Theorem~\ref{thm2}\eqref{thm2.4}}\label{s5.4}

Let $X$ be a string representation of \eqref{eq3}. The {\em right support}
of $X$ is the set of all $j\in\mathbb{Z}$ for which there is $i\in\mathbb{Z}$
such that the value of $X$ at the vertex $i|j$ is non-zero. The {\em left support}
of $X$ is the set of all $i\in\mathbb{Z}$ for which there is $j\in\mathbb{Z}$
such that the value of $X$ at the vertex $i|j$ is non-zero. Both the left and 
the right support of a string module is an integer interval. The width of a 
module is simply the cardinality of the right support of this module plus one. 
The height of a  module is simply the cardinality of the left support of this 
module plus one. Therefore the initial vertex and the width uniquely determine
the right support. The initial vertex and the height uniquely determine
the left support.

If $Y$ is a string $A$-$A$-bimodule and $X$ is any $A$-$A$-bimodule, then, by
Propositions~\ref{prop26} and \ref{prop27}, every direct summand of $X\otimes_A Y$
is a string $A$-$A$-bimodule. Moreover, directly from the definitions it follows
that the right support of each such direct summand is a subset of the right support
of $Y$. Therefore, to be in the same left cell, two string $A$-$A$-bimodules must
have the same left support. By Theorem~\ref{thm1}\eqref{thm1.4}, the two bimodules
should also have the same number of valleys. This implies that they either
have the same type of  of type $M$ or $N$ alternatively of type $W$ or $S$.

If two string $A$-$A$-bimodules $X$ and $Y$ are of the same type and have 
the same right support, then $X\cong{}^{\theta^k}Y$, for some power of $k$
and hence they belong to the same right cell by the same argument as
in the first paragraph of Subsection~\ref{s4.2}. From 
Lemmata~\ref{lem32} and \ref{lem33} it thus follows that two bimodules 
of type $M$ and $N$ and with the same right support belong to the same left cell.
From Lemmata~\ref{lem34} and \ref{lem35} it follows that two bimodules 
of type $W$ and $S$ and with the same right support belong to the same left cell.

\subsection{Proof of Theorem~\ref{thm2}\eqref{thm2.5}}\label{s5.5}

Mutatis mutandis the proof of Theorem~\ref{thm2}\eqref{thm2.4}.

\subsection{Proof of Theorem~\ref{thm2}\eqref{thm2.6}}\label{s5.6}

Strong regularity of a two-sided cell $\mathcal{J}$ in the sense of 
\cite[Subsection~4.8]{MM1} means that the intersection of any left and
any right cell inside $\mathcal{J}$ consists of one element. 
For $\mathcal{J}_{\mathrm{split}}$, the claim follows directly from
the description of left and right cells in Theorem~\ref{thm2}\eqref{thm2.2} and
Theorem~\ref{thm2}\eqref{thm2.3}. For $\mathcal{J}_{k}$,
the claim follows from the description of left and 
right cells in Theorem~\ref{thm2}\eqref{thm2.4} and Theorem~\ref{thm2}\eqref{thm2.5}.

\vspace{5mm}

\noindent

Department of Mathematics, Uppsala University, Box. 480,

SE-75106, Uppsala, SWEDEN, email: {\tt helena.jonsson\symbol{64}math.uu.se}


\begin{thebibliography}{9999999}






\bibitem[BR]{BR} M.C.R.~Butler, C.M.~Ringel. Auslander-Reiten sequences with few middle terms and

applications to string algebras, Comm. Algebra {\bf 15} (1987), no. 1-2, 145--179.





\bibitem[DH]{DH} E.~Darp{\"o}, M.~Herschend. On the representation ring of the polynomial algebra

over a perfect field. Math. Z. {\bf 265} (2010), 601--615.



\bibitem[Fo]{Fo} L.~Forsberg. Sub-bimodules of the identity bimodule for cyclic quivers.

Preprint arXiv:1702.04583.

\bibitem[GaM]{GaMa} O.~Ganyushkin, V.~Mazorchuk. Classical finite transformation semigroups. 

An introduction. Algebra and Applications {\bf 9}. Springer-Verlag London, Ltd., London, 2009. 

\bibitem[Gr]{Gr} J. A. Green. On the structure of semigroups, Ann. of Math. (2) {\bf 54} (1951), 163--172.

\bibitem[GM1]{GM1} A.-L. Grensing, V. Mazorchuk. Categorification of the Catalan monoid.

Semigroup Forum \textbf{89} (2014), no. 1, 155--168.

\bibitem[GM2]{GM2} A.-L. Grensing, V. Mazorchuk. Categorification using dual projection functors.

Commun. Contemp. Math. \textbf{19} (2017), no. 3, 1650016, 40 pp. 



\bibitem[Jo]{Jo} H.~Jonsson. Bimodules over dual numbers. Master Thesis. Uppsala University 2017.










\bibitem[KuM]{KuM} G.~Kudryavtseva, V. Mazorchuk. On multisemigroups. Port. Math.

\textbf{72} (2015), no. 1, 47--80.











\bibitem[Ma]{Ma} V. Mazorchuk. Lectures on $\mathfrak{sl}_2(\mathbb{C})$-modules. 

Imperial College Press, London, 2010. 

\bibitem[MM1]{MM1} V. Mazorchuk, V. Miemietz. Cell 2-representations of finitary 2-categories.

Compositio Math \textbf{147}  (2011), 1519--1545.

\bibitem[MM2]{MM2} V. Mazorchuk, V. Miemietz. Additive versus abelian 2-representations of fiat 2-categories.

Moscow Math. J. \textbf{14} (2014), No.3, 595--615.









\bibitem[MMZ]{MMZ} V. Mazorchuk, V. Miemietz, X.~Zhang. Characterisation and applications of

$\Bbbk$-split bimodules. Preprint  arXiv:1701.03025.



\bibitem[MZ]{MZ} V.~Mazorchuk, X.~Zhang. Bimodules over uniformly oriented $A_n$ quivers with 

radical square zero. Preprint arXiv:1703.08377. To appear in Kyoto J. Math.

\bibitem[Na]{Na} T.~Nakayama. Note on uni-serial and generalized uni-serial rings. Proc. Imp. 

Acad. Tokyo {\bf 16} (1940), 285--289.


\bibitem[Ri]{Ri} C.~Riedtmann. Algebren, Darstellungsköcher, {\"U}berlagerungen und zur{\"u}ck. 

Comment. Math. Helv. {\bf 55} (1980), no. 2, 199--224. 



\bibitem[WW]{WW} B. Wald, J. Waschbusch. Tame biserial algebras. J. Algebra {\bf 95} (1985), no. 2,

480--500.









\end{thebibliography}
\end{document}